\begin{document}		



\theoremstyle{plain}			
	\newtheorem{lem}		{Lemma}
	\newtheorem{prop}		{Proposition}
	\newtheorem{theo}		{Theorem}	
	\newtheorem{coro}		{Corollaire}
\theoremstyle{definition}
	\newtheorem{defn}		{Definition}
	\newtheorem{conj}		{Conjecture}
	\newtheorem{exmp}		{Exemple}
\theoremstyle{remark}			
	\newtheorem*{case}		{Cas}	
	\newtheorem*{mynote}		{Note}	
	\newtheorem{remark}		{Remark}	


\def\RR{\mathbb{R}}
\def\pa{\partial}



\newpage				


\pagenumbering{arabic}	

\title{Incompressible limit of a continuum model of tissue growth for two cell populations}

 \author[1]{Pierre Degond}
  \author[1,2]{Sophie Hecht}
   \author[3]{Nicolas Vauchelet}
   \affil[1]{Department of Mathematics, Imperial College London, London SW7~2AZ, UK }
   \affil[2]{Francis Crick Institute, 1 Midland Rd, London NW1 1AT, UK }
   \affil[3]{LAGA, Universite Paris 13, 99 avenue Jean-Baptiste Clement, 93430 Villetaneuse, France }
   \affil[ ]{\textit {p.degond@imperial.ac.uk, sh5015@ic.ac.uk, vauchelet@math.univ-paris13.fr}}
   
   \maketitle
   
   \begin{abstract}
   This paper investigates the incompressible limit of a system modelling the growth of two cells population. The model describes the dynamics of cell densities, driven by pressure exclusion and cell proliferation. It has been shown that solutions to this system of partial differential equations have the segregation property, meaning that two population initially segregated remain segregated.
This work is devoted to the incompressible limit of such system towards a free boundary Hele Shaw type model for two cell populations.
   \end{abstract}
   
   \providecommand{\Acknowledgements}[1]{\textbf{{Acknowledgements. }} #1}
\Acknowledgements{PD acknowledges support by the Engineering and Physical Sciences
Research Council (EPSRC) under grants no. EP/M006883/1, EP/N014529/1 and
EP/P013651/1, by the Royal Society and the Wolfson Foundation through a
Royal Society Wolfson Research Merit Award no. WM130048 and by the National Science
Foundation (NSF) under grant no. RNMS11-07444 (KI-Net). PD is on leave
from CNRS, Institut de Math\'ematiques de Toulouse, France. SH acknowledge support from the Francis Crick Institute which receives its core funding from Cancer Re- search UK (FC001204), the UK Medical Research Council (FC001204), and the Well- come Trust (FC001204).
N.V. acknowledges partial support from the ANR blanche project Kibord No ANR-13-BS01-0004 funded by the French Ministry of Research. Part of this work has been done while N.V. was a CNRS fellow at Imperial College, he is really grateful to the CNRS and to Imperial College for the opportunity of this visit. PD, SH and NV would like to thanks Jean-Paul Vincent for stimulating discussion.}

\providecommand{\DataStatement}[1]{\textbf{{Data Statement. }} #1}
\DataStatement{No new data were collected in the course of this research.}

\providecommand{\keywords}[1]{\textbf{{Keywords.}} #1}
\keywords{Tissue growth; Two cell populations; Gradient flow; Incompressible limit; Free boundary problem}

\providecommand{\AMS}[1]{\textbf{{AMS subject classifications.}} #1}
\AMS{35K55; 35R35; 65M08; 92C15; 92C10}

\section{Introduction}

Diversity is key in biology. It appears at all kind of level from the human scale to the microscopic scale, with million of cells types; each scales impacting on the others. During development, the coexistence of different cells types following different rules impact on the growth of tissue and then on the global structures. In a more specific case, this can be observed in cancerous tissue with the invasion of tumour cells in an healthy tissue creating a abnormal growth. Furthermore, cancerous cells are not playing all the same roles. They can be proliferative or quiescent depending of their positions, ages, \ldots To study the influence of these diverse cells on each others from a theoretical view, we introduce mathematical model for multiple populations. In this paper we are interesting in the global dynamics and interactions of the two populations, meaning that we focus specifically on continuous models. 

In the already existing literature on macroscopic model, we distinguish two categories. The most common ones involved partial differential equations (PDE) in which cells are represented by densities. These models have been widely used to model growth of tissue \cite{BD,RBEJPJ}, in particular for tumor growth \cite{AM04,BCGRS,Byrne,BenAmar}. Another way to model tissue growth is by considering free boundary models \cite{Cui,FriedmanHu,Greenspan}. In these models the tissue is described by a domain and its growth and movement are driven by the motion of the boundary. The link between these two types of model has been been made via an incompressible limit in \cite{HV,KimPozar,MPQ,PQTV,PQV,PV}. This link is interesting as both models have their advantages. On the one hand PDE relying models, also called mechanical models, are widely studied with many numerical and analytical tools. On the other hand free boundary models are closer to the biologic vision of the tissue and allow to study motion and dynamics of the tissue. This paper aims to extend the link between the mechanical and the free boundary models, in the case of multiple populations system.

In the specific case of multiple populations, several mathematical models have been already introduced. In particular in population dynamics, the famous Lotka-Volterra system \cite{Lotka} models the dynamics of a predator-prey system. This model has been extended to nonlinear diffusion Lotka-Volterra systems \cite{BERTSCH198756,B1987,BGHP,BT}. For the tumor growth modelling (see e.g. \cite{CGP}), some models focus on mechanical property of tissues such as contact inhibition \cite{BHIM,BPM,GSV} and mutation \cite{GALIANO}. They have been extended to multiple populations \cite{GALIANO,SHIGESADA197983}. Solutions to these models may have some interesting spatial pattern known as segregation \cite{BGHP,CarilloSchmidtchen,Mimura1980,SHIGESADA197983}.

The two cell populations system under investigation in this paper is an extension on a simplest cell population model proposed in \cite{BD,PQV}. Let $n(x,t)$ be the density of a single category of cell depending on the position $x \in \RR^d$ and the time $t>0$, and let $p(x,t)$ be the mechanical pressure of the system. The pressure is generated by the cell density and is defined via a pressure law $p=P(n)$. 
This pressure exerted on cells induces a motion with a velocity field $v=v(x,t)$ related to the pressure through the Darcy's law. The proliferation is modelled by a growth term $G(p)$ which is pressure dependent. 
With this assumption, the mathematical model reads
\begin{align*}
& \pa_t n + \nabla\cdot (n v) = n G(p),  \quad \mbox{ on }\ \RR^d\times\RR^+,  \\
& v = - \nabla p, \qquad p = P(n).
\end{align*}
In \cite{KimPozar,MPQ,PQTV,PQV,PV}, the pressure law is given by $P(n)= \frac{\gamma}{\gamma -1}n^{\gamma-1}$ which allows to recover the porous medium equation. 
However, in many tissues, cells may not overlap, implying that the maximal packing density should be bounded by $1$.
To take into account this non-overlapping constraint, the pressure law $P(n) = \epsilon \frac{n}{1-n}$ has been taken in \cite{HV}.
This latter choice of pressure law has also been taken in the present paper. For this one population model, it has been showed in \cite{HV}, that at the incompressible limit, $\epsilon \rightarrow 0$ (or $\gamma\to +\infty$ depending on the pressure expression), the model converges towards a Hele-Shaw type free boundary problem.

The previous model has the particularity to derive from the free energy 
\begin{equation*} 
\mathcal{E}(n) = \int_\RR P(n(x))dx.
\end{equation*}
as a gradient flow for the Wasserstein metric. Using this property we derive a model for two species of cells. Let us denote $n_1(x,t)$
 and $n_2(x,t)$ the two cell densities depending on the position $x \in \RR^d$ and the time $t>0$. We assume that the  pressure depends on the total density $n=n_1 +n_2$. As the pressure depends on a parameter $\epsilon$, we introduce this dependancy in the notation. We define the free energy for the two cell populations by,
\begin{equation*} 
\mathcal{E}(n_{\epsilon}) = \int_\RR P({n_1}_{\epsilon}(x)+{n_2}_{\epsilon}(x))dx.
\end{equation*}
Restricting to the one dimensional case, the system of equation deriving from this free energy is then defined by,
\begin{align}
\partial_t {n_1}_{\epsilon} - \partial_x ({n_1}_{\epsilon} \partial_x p_{\epsilon}) = {n_1}_{\epsilon} G_1(p_{\epsilon}), \label{eq:n1} \\
\partial_t {n_2}_{\epsilon} - \partial_x ({n_2}_{\epsilon} \partial_x p_{\epsilon}) = {n_2}_{\epsilon} G_2(p_{\epsilon}), \label{eq:n2} \\
p_\epsilon = P(n_\epsilon) = \epsilon \frac{n_{\epsilon}}{1-n_{\epsilon}},  \label{eq:p2}  \\
n_\epsilon = {n_1}_\epsilon + {n_2}_\epsilon, \label{eq:n}
\end{align}
with $G_1, G_2$ the growth functions, and $p_{\epsilon}$ the pressure.  

The existence of solution for system \eqref{eq:n1}-\eqref{eq:n} has been proven in \cite{BHIM,BPM} for a compact domain $(-L,L)$ with $L>0$, with Neumann homogeneous boundary condition. In particular, it is shown that at a fix $\epsilon>0$, given initial conditions ${n_1}^{\mbox{\scriptsize ini}}_{\epsilon}$ and ${n_2}^{\mbox{\scriptsize ini}}_{\epsilon}$ satisfying, 
\begin{equation}\label{eq:segregini} 
\exists\, \zeta^0 \in \RR \mbox{ such that } {n_1}^{\mbox{\scriptsize ini}}_{\epsilon}= {n}^{\mbox{\scriptsize ini}}_{\epsilon} \mathbf{1}_{x \leq  \zeta^0} \mbox{ and } {n_2}^{\mbox{\scriptsize ini}}_{\epsilon}= {n}^{\mbox{\scriptsize ini}}_{\epsilon} \mathbf{1}_{x \geq  \zeta^0}, 
\end{equation}
and
\begin{equation}\label{eq:positivity} 
{n_1}^{\mbox{\scriptsize ini}}_{\epsilon},{n_2}^{\mbox{\scriptsize ini}}_{\epsilon} \geq 0 \mbox{ and } 0 < A_0 \leq {n_1}^{\mbox{\scriptsize ini}}_{\epsilon}+{n_2}^{\mbox{\scriptsize ini}}_{\epsilon} \leq B_0
\end{equation}
then there exists $\zeta_{\epsilon} \in C([0,\infty)) \cap C^1( (0,\infty))$ such that 
\begin{equation}\label{eq:segre} 
{n_1}_{\epsilon} (t,x)= {n}_{\epsilon} (t,x)\mathbf{1}_{x \leq  \zeta_{\epsilon}(t)} \quad \mbox{ and } \quad {n_2}_{\epsilon}(t,x)= {n}_{\epsilon}(t,x) \mathbf{1}_{x \geq  \zeta_{\epsilon}(t)} ,
\end{equation}
and ${n_1}_{\epsilon} $ and ${n_2}_{\epsilon} $ respectively satisfy \eqref{eq:n1} on $\{ (t,x), x\leq \zeta_{\epsilon}(t)\}$ and  \eqref{eq:n2} on $\{ (t,x), x\geq \zeta_{\epsilon}(t)\}$. In addition ${n}_{\epsilon} = {n_1}_{\epsilon} +{n_2}_{\epsilon} $ is solution to:
\begin{equation}\label{eq:solBPM}
  \left\{
      \begin{aligned}
         & \partial_t {n}_{\epsilon} - \partial_x ({n}_{\epsilon} \partial_x p_{\epsilon}) = {n}_{\epsilon} G_1(p_{\epsilon}) \quad \mbox{ on } \{ (t,x), x\leq \zeta_{\epsilon}(t)\},\\
        & \partial_t {n}_{\epsilon} - \partial_x ({n}_{\epsilon} \partial_x p_{\epsilon}) = {n}_{\epsilon} G_2(p_{\epsilon}) \quad \mbox{ on } \{ (t,x), x\geq \zeta_{\epsilon}(t)\},\\
        & n_\epsilon(t,\zeta_{\epsilon}(t)^-)=n_\epsilon(t,\zeta_{\epsilon}(t)^+), \\
        & \zeta_{\epsilon}'(t) = - \partial_x p(t,\zeta_{\epsilon}(t)^-)= - \partial_x p(t,\zeta_{\epsilon}(t)^+),\\
        & \partial_x {n}_{\epsilon}(\pm L,0) = 0\mbox{ for } t>0.
      \end{aligned}
    \right.
\end{equation}
 In \cite{BPM}, the reaction term is not the same than in this paper, however it is easy to see that there proof can be extended to our system under a set of assumptions for the growth functions which will be defined latter in this paper.
 
 The aim of this paper is is to study the incompressible limit $\epsilon \rightarrow 0$ for the two populations systems. When the two species are not in contact, the system is equivalent to the one population model \cite{HV}, this is why we limit ourself in this paper to the case where the two populations are initially in contact. To use the solutions defined in \cite{BPM}, we restrict the space to a compact domain $(-L,L)$ with $L>0$ and assume \eqref{eq:segregini} and \eqref{eq:positivity} are verified. Outside the domain $(-L,L)$, the system will be equivalent to the one population model.

We firstly remark that by adding \eqref{eq:n1} and \eqref{eq:n2}, we get,
\begin{align}
\partial_t n_{\epsilon} - \partial_x (n_{\epsilon} \partial_x {p}_{\epsilon}) = {n_1}_{\epsilon} G_1({p}_{\epsilon})+{n_2}_{\epsilon}G_2({p}_{\epsilon}) \mbox{ in } (-L,L). 
\label{eq:nn}
\end{align}
 Multiplying by $P'(n_\epsilon)$ we find an equation for the pressure,
\begin{equation}\label{eq:pp}
 \partial_t p_{\epsilon} - (\frac{p_{\epsilon}^2}{\epsilon}+p_{\epsilon})\partial_{xx} p_{\epsilon} -  |\partial_x p_{\epsilon}|^2 = \frac{1}{\epsilon}(p_{\epsilon}+\epsilon)^2({n_1}_{\epsilon} G_1(p_{\epsilon})+{n_2}_{\epsilon} G_2(p_{\epsilon})) \mbox{ in } (-L,L).
\end{equation}
Formally, passing at the limit $\epsilon \rightarrow 0$, we expect the relation,
$$
-p_0^2 \partial_{xx} p_0 = p_0^2 ({n_1}_{0} G_1(p_{0})+ {n_2}_{0} G_2(p_{0})) \mbox{ in } (-L,L).$$
In addition, passing formally to the limit $\epsilon\to 0$ into \eqref{eq:p2}, it appears clearly that $(1-n_0) p_0=0$. We consider the domain $\Omega_0(t)=\{x \in (-L,L), p_0(x,t)>0\}$, then, from the latter identity, $n_0 = 1$ on $\Omega_0$.
Moreover, from the segregation property, we have ${n_1}_{\epsilon}{n_2}_{\epsilon}=0$ when the two densities are initially segregated. 
Passing to the limit $\epsilon\to 0$ into this relation implies ${n_1}_{0} {n_2}_{0}=0$. Then we may split $\Omega_0(t)$ into two disjoint sets $\Omega_1(t)=\{x \in (-L,L),{n_1}_{0}(x,t)=1\}$ and $\Omega_2(t)=\{x\in (-L,L),{n_2}_{0}(x,t)=1\}$. Formally, it is not difficult to deduce from \eqref{eq:pp} that when $\epsilon\to 0$, we expect to have the relation
$$
-p_0^2 \partial_{xx} p_0 = \left\{
      \begin{aligned}
        p_0^2 G_1(p_0) \text{ on } \Omega_1(t),\\
        p_0^2 G_2(p_0) \text{ on } \Omega_2(t).\\
      \end{aligned}
    \right.
$$
Then we obtain a free boundary problem of Hele-Shaw type: On $\Omega_1(t)$, we have ${n_1}_{0}=1$ and $-\partial_{xx} p_0 = G_1(p_0)$, on $\Omega_2(t)$, we have ${n_2}_{0}=1$ and $-\partial_{xx} p_0 = G_2(p_0)$. 

The outline of the paper is the following. In Section \ref{sec:mainresult} we expose the main results of this paper, which are the convergence of the continuous model \eqref{eq:n1}-\eqref{eq:n} when $\epsilon \rightarrow 0$ to a Hele-Shaw free boundary model, and uniqueness for this limiting model. Section \ref{sec:proof} is devoted to the proof of these main results. The proof on the convergence relies on some a priori estimate and compactness techniques. We use Hilbert duality method to establish uniqueness of solution to the limiting system. Finally in Section \ref{sec:numeric}, we present some numerical simulations of the system  \eqref{eq:n1}-\eqref{eq:n} when $\epsilon$ is going to 0 and simulations of a specific application on tumor spheroid growth.

\section{Main results}\label{sec:mainresult}

In this paper we aim to prove the incompressible limit $\epsilon \rightarrow 0$ of the two populations model with non overlapping constraint \eqref{eq:n1}-\eqref{eq:n} in one dimension. We first introduce a list of assumptions on the growth terms and the initial conditions. For the growth, we consider the following set of assumptions:
\begin{equation}\label{hypG}
  \left\{
      \begin{aligned}
         &\exists\, G_m>0, \quad \| G_1 \|_{\infty} \leq G_m, \quad \| G_2 \|_{\infty} \leq G_m,\\
        & G_1', G_2' <0, \quad \mbox{ and }  \exists\, P^1_M, P^2_M>0, \quad G_1(P^1_M)=0 \mbox{ and } G_2(P^2_M)=0,\\
        & \exists\, \gamma>0, \quad \min(\inf_{[0,P^1_M]} |G_1'|,\inf_{[0,P^2_M]} |G_2'| )= \gamma,\\
        & P_M := \max(P_M^1,P_M^2), \quad \exists\, g_m \geq 0, \ \min\left(\inf_{[0,P_M]} G_1,\inf_{[0,P_M]} G_2\right) \geq -g_m. 
      \end{aligned}
    \right.
\end{equation}
The set of assumptions on the growth rate is standard and similar to the one in \cite{HV}. The parameters $P_M^1$ and $P_M^2$ are called homeostatic pressures which represent the maximal pressure that the tissue can handle before starting dying.
For the initial datas, we assume that there exists $\epsilon_0>0$ such that, for all $\epsilon\in (0,\epsilon_0)$,  for all $x\in(-L,L)$,
\begin{equation}\label{hypini}
  \left\{
      \begin{aligned}
      & 0 \leq {n_1}^{\mbox{\scriptsize ini}}_{\epsilon}, \quad 0 \leq {n_2}^{\mbox{\scriptsize ini}}_{\epsilon}, \quad {n}^{\mbox{\scriptsize ini}}_{\epsilon}={n_1}^{\mbox{\scriptsize ini}}_{\epsilon}+{n_2}^{\mbox{\scriptsize ini}}_{\epsilon}, \quad 0 < A_0 \leq {n}^{\mbox{\scriptsize ini}}_{\epsilon} \leq B_0, \quad \partial_x n_\epsilon^{\mbox{\scriptsize ini}} (\pm L) = 0, \\
      & \exists\, \zeta^0 \in (-L,L) \mbox{ such that } {n_1}^{\mbox{\scriptsize ini}}_{\epsilon}= {n}^{\mbox{\scriptsize ini}}_{\epsilon} \mathbf{1}_{x \leq  \zeta^0} \mbox{ and } {n_2}^{\mbox{\scriptsize ini}}_{\epsilon}= {n}^{\mbox{\scriptsize ini}}_{\epsilon} \mathbf{1}_{x \geq  \zeta^0}, \\
       & p^{\mbox{\scriptsize ini}}_{\epsilon}:= \epsilon \frac{n_\epsilon^{\mbox{\scriptsize ini}}}{1-n_\epsilon^{ini}} \leq P_M := \max(P_M^1,P_M^2), \\
       &  \max (\| \partial_{x} {n_1}^{\mbox{\scriptsize ini}}_{\epsilon} \|_{L^1(-L,L)}, \| \partial_{x} {n_2}^{\mbox{\scriptsize ini}}_{\epsilon} \|_{L^1(-L,L)}) \leq C,   \\
       & \exists\, n_1^{\mbox{\scriptsize ini}}, n_2^{\mbox{\scriptsize ini}} \in L_+^1(-L,L), 
\mbox{ such that } \|{n_1}^{\mbox{\scriptsize ini}}_\epsilon - n_1^{\mbox{\scriptsize ini}}\|_{L^1(-L,L)} \to 0  \\
& \qquad \mbox{ and } \|{n_2}^{\mbox{\scriptsize ini}}_\epsilon - n_2^{\mbox{\scriptsize ini}}\|_{L^1(-L,L)} \to 0, \mbox{ as } \epsilon \to 0.
      \end{aligned}
    \right.
\end{equation}
These initial conditions imply that ${n_1}^{ini}_{\epsilon}$ and ${n_2}^{ini}_{\epsilon}$ are uniformly bounded in $W^{1,1}(-L,L)$. Notice also that the existence of $\zeta^0$ being the interface between the two species implies that the two populations are initially segregated.

From \cite{BPM}, we recover that at a fix $\epsilon>0$ under assumption \eqref{hypG}, given initial conditions ${n_1}^{\mbox{\scriptsize ini}}_{\epsilon}$ and ${n_2}^{\mbox{\scriptsize ini}}_{\epsilon}$ satisfying \eqref{hypini}, then there exists $\zeta_{\epsilon} \in C([0,\infty)) \cap C^1( (0,\infty))$ such that ${n_1}_{\epsilon} $ and ${n_2}_{\epsilon} $ verify \eqref{eq:segre} and ${n_1}_{\epsilon} $ and ${n_2}_{\epsilon} $ respectively satisfy \eqref{eq:n1} on $\{ (t,x), x\leq \zeta_{\epsilon}(t)\}$ and  \eqref{eq:n2} on $\{ (t,x), x\geq \zeta_{\epsilon}(t)\}$. In addition ${n}_{\epsilon} = {n_1}_{\epsilon} +{n_2}_{\epsilon} $ is solution to \eqref{eq:solBPM}.

\begin{remark}
Considering ${n_1}_\epsilon$ and ${n_2}_\epsilon$ defined previously, we have for $i=1,2$
$$ \partial_t  {n_i}_\epsilon= \partial_t  {n}_{\epsilon} (t,x)\mathbf{1}_{x \leq  \zeta_{\epsilon}(t)} + {n}_{\epsilon} \zeta_{\epsilon}'(t)\delta_{x=\zeta_{\epsilon}(t)} .$$
Given \eqref{eq:solBPM}, for all $\varphi \in C_c^{\infty} (-L,L)$ we compute, for $i=1, 2$
\begin{align*}
\int_{\RR} \partial_t {n_i}_{\epsilon} \varphi ~dx = & \int_{-\infty}^{\zeta_{\epsilon}(t)} \partial_t {n}_{\epsilon} \varphi ~dx + {n}_{\epsilon} (t, \zeta_{\epsilon}(t)) \zeta_{\epsilon}'(t) \varphi(\zeta_{\epsilon}(t))\\
= &  \int_{-L}^{\zeta_{\epsilon}(t)} \partial_x ({n}_\epsilon \partial_x p_\epsilon) \varphi ~dx + \int_{-L}^L {n_i}_\epsilon G_i(p_{\epsilon}) \varphi ~dx + {n}_{\epsilon}(t,\zeta_{\epsilon}(t)) \zeta_{\epsilon}'(t) \varphi(\zeta_{\epsilon}(t)) \\
= &  - \int_{-L}^{\zeta_{\epsilon}(t)} {n}_\epsilon \partial_x p_\epsilon \partial_x \varphi ~dx  +  {n}_\epsilon(t, \zeta_{\epsilon}(t)) \partial_x p_\epsilon(t, \zeta_{\epsilon}(t)) \varphi( \zeta_{\epsilon}(t)) \\
& +  \int_{-L}^L {n_i}_\epsilon G_i(p_{\epsilon}) \varphi ~dx - {n}_{\epsilon}(t, \zeta_{\epsilon}(t)) \partial_x p_\epsilon(t,  \zeta_{\epsilon}(t))\varphi( \zeta_{\epsilon}(t)) \\ 
= &  \int_{-L}^L {n_i}_\epsilon  \partial_x p_\epsilon \partial_x \varphi ~ dx  +  \int_{-L}^L {n_i}_\epsilon G_i(p_{\epsilon}) \varphi ~dx \\
= &  \int_{-L}^L(\partial_x ({n_i}_\epsilon  \partial_x p_\epsilon) + {n_i}_\epsilon G_i(p_{\epsilon}) )\varphi ~ dx .  
\end{align*}
Hence ${n_1}_\epsilon$ and ${n_2}_\epsilon$ are weak solutions to \eqref{eq:n1} and \eqref{eq:n2} on $(-L,L)$ respectively. This result will be used in the following.
\end{remark}

\medskip

Considering this particular solution, we are going to show the incompressible limit $\epsilon \rightarrow 0$ for system \eqref{eq:n1}-\eqref{eq:n}. The main result is the following
\begin{theo}\label{TH1}
Let $T>0$, $Q_T=(0,T)\times(-L,L)$. 
Let $G_1$, $G_2$ and $({n_1}^{\mbox{\scriptsize ini}}_{\epsilon})$, $({n_2}^{\mbox{\scriptsize ini}}_{\epsilon})$ satisfy assumptions \eqref{hypG}--\eqref{hypini}. After extraction of subsequences, the densities ${n_1}_{\epsilon}$, ${n_2}_{\epsilon}$ and the pressure $p_{\epsilon}$, solutions defined in \eqref{eq:segre}-\eqref{eq:solBPM}, converge strongly in $L^1(Q_T)$ as $\epsilon \rightarrow 0$ towards the respective limit ${n_1}_0, {n_2}_0 \in L^\infty([0,T];L^1(-L,L))\cap BV(Q_T)$, and
$p_0 \in BV(Q_T)\cap L^2([0,T];H^1(-L,L))$. Moreover, these functions satisfy, for all $(t,x)\in Q_T$,
\begin{align}
& 0 \leq {n_1}_0(t,x) \leq 1, \quad 0 \leq {n_2}_0(t,x) \leq 1,  \label{unif0}  \\
&  0 < A_0 e^{- g_m t} \leq n_0(t,x) \leq 1, \quad 0 \leq p_0 \leq P_M, \label{unif0np} \\
& \partial_t n_0 - \partial_{xx} p_0 = {n_1}_0 G_1(p_0)+{n_2}_0 G_1(p_0), \mbox{ in } \mathcal{D}'(Q_T), \label{limitn0}
\end{align}
where $n_0={n_1}_0 +{n_2}_0$, and
\begin{align}
\partial_t {n_1}_0 - \partial_x ( {n_1}_0 \partial_x p_0) = {n_1}_0 G_1(p_0), & \quad \text{ in } \mathcal{D}'(Q_T), \label{limitn1} \\
\partial_t {n_2}_0 - \partial_x ( {n_2}_0 \partial_x p_0) = {n_2}_0 G_2(p_0), & \quad \text{ in } \mathcal{D}'(Q_T), \label{limitn2}
\end{align}
complemented with Neumann boundary conditions $\partial_x p_0(\pm L)  = 0$.
Moreover, we have the relations
\begin{align}\label{n0p02}
(1-n_0)p_0=0,
\end{align}
and \begin{align}\label{segre}
{n_1}_0{n_2}_0=0,
\end{align}
and the complementary relation
\begin{align}\label{relcompbis}
p_0\left(\partial_{xx} P_0 + \int_0^t (n_{10}G_1(p_0) + n_{20} G_2(p_0))\,ds + n_0^{\mbox{\scriptsize ini}} -1\right) = 0.
\end{align}
where $P_0$ is defined by $P_0(t,x) =\int_0^t p_0(s,x)ds$.
\end{theo}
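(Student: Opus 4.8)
The plan is to follow the classical three-step strategy for incompressible limits: derive $\epsilon$-uniform a priori estimates, extract strongly convergent subsequences by compactness, pass to the limit in the weak formulation, and finally identify the structural relations. First I would prove the uniform bounds \eqref{unif0}--\eqref{unif0np}. The upper bound $p_\epsilon\le P_M$ follows from a maximum-principle argument on \eqref{eq:pp}: since $G_1,G_2$ are decreasing with $G_i(P_M^i)=0$, one has $G_i(P_M)\le 0$, so the constant $P_M$ is a supersolution (the reaction term is $\le 0$ there for any $n_{i,\epsilon}\ge 0$), and the initial bound $p_\epsilon^{ini}\le P_M$ in \eqref{hypini} propagates in time; inverting the pressure law \eqref{eq:p2} gives $n_\epsilon\le P_M/(P_M+\epsilon)<1$, hence $0\le n_{i,0}\le n_0\le 1$ in the limit, while the lower bound $n_\epsilon\ge A_0 e^{-g_m t}$ comes from \eqref{eq:nn} by comparison using $G_i\ge -g_m$. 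Multiplying \eqref{eq:nn} by $p_\epsilon$, integrating by parts (the boundary terms vanishing by the Neumann condition), and noting that $p_\epsilon\,\partial_t n_\epsilon=\partial_t\Phi(n_\epsilon)$ for the primitive $\Phi'=P$, yields control of $\int_0^T\!\int n_\epsilon|\partial_x p_\epsilon|^2$, which together with the lower bound on $n_\epsilon$ gives a uniform $L^2((0,T);H^1)$ bound on $p_\epsilon$.

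The heart of the argument is the uniform $BV(Q_T)$ bounds. For the spatial variation I would differentiate \eqref{eq:n1}, \eqref{eq:n2} and \eqref{eq:pp} in $x$, multiply by the sign of the relevant derivative and integrate, closing a Gronwall inequality for $\|\partial_x n_{i,\epsilon}\|_{L^1}$ and $\|\partial_x p_\epsilon\|_{L^1}$ starting from the uniform initial bounds in \eqref{hypini}; the time variation is then read off from the equations themselves. The compact embedding $BV(Q_T)\hookrightarrow L^1(Q_T)$ then provides subsequences with $n_{1,\epsilon}\to n_{1,0}$, $n_{2,\epsilon}\to n_{2,0}$, $p_\epsilon\to p_0$ strongly in $L^1$ and a.e., while $\partial_x p_\epsilon\rightharpoonup\partial_x p_0$ weakly in $L^2(Q_T)$. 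Passing to the limit in the weak formulation is then direct: the flux terms $n_{i,\epsilon}\,\partial_x p_\epsilon$ converge as the product of an $L^2$-strongly convergent factor (the densities are bounded and, being bounded in $L^\infty$, converge in every $L^q$, $q<\infty$) with an $L^2$-weakly convergent one, and the reaction terms $n_{i,\epsilon}G_i(p_\epsilon)$ converge strongly by continuity of $G_i$ together with the a.e. convergence of $p_\epsilon$, yielding \eqref{limitn0}--\eqref{limitn2} with the Neumann conditions.

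It remains to identify the limiting relations. Writing the pressure law \eqref{eq:p2} as $p_\epsilon(1-n_\epsilon)=\epsilon n_\epsilon$ and letting $\epsilon\to 0$ gives \eqref{n0p02}, the right-hand side vanishing. Passing to the limit in $n_{1,\epsilon}n_{2,\epsilon}=0$, which holds for each $\epsilon$ by the segregation property \eqref{eq:segre}, gives \eqref{segre} by strong convergence of both densities. For the complementarity relation \eqref{relcompbis}, I would integrate \eqref{limitn0} in time to obtain $\partial_{xx}P_0=n_0-n_0^{ini}-\int_0^t(n_{10}G_1(p_0)+n_{20}G_2(p_0))\,ds$, so that $n_0-1=\partial_{xx}P_0+n_0^{ini}-1+\int_0^t(\cdots)\,ds$; multiplying by $p_0$ and using $p_0(n_0-1)=-(1-n_0)p_0=0$ from \eqref{n0p02} produces exactly \eqref{relcompbis}.

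I expect the main obstacle to be the uniform $BV$ and strong compactness estimates for the pressure. The difficulty is the degeneracy of the pressure law as $\epsilon\to 0$: since $P'(n_\epsilon)=\epsilon/(1-n_\epsilon)^2$ blows up like $1/\epsilon$ near the packing constraint, the $BV$ bound on $p_\epsilon$ cannot be inherited from that of $n_\epsilon$ and must be extracted directly from the singular equation \eqref{eq:pp}, whose coefficient $p_\epsilon^2/\epsilon+p_\epsilon$ is unbounded; yet this strong convergence of $p_\epsilon$ is indispensable both to pass to the limit in the nonlinear terms $G_i(p_\epsilon)$ and to give meaning to \eqref{relcompbis}. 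A second difficulty, proper to the two-population setting, is to obtain compactness of the individual densities $n_{1,\epsilon},n_{2,\epsilon}$ across the moving interface $\zeta_\epsilon(t)$ and to transfer the segregation structure \eqref{eq:segre} to the limit, rather than working with the total density alone.
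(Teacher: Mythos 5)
Your overall architecture (uniform bounds, compactness, limit passage, then the structural relations) matches the paper, and several of your steps are sound: the multiplier argument giving $\iint n_\epsilon|\partial_x p_\epsilon|^2\le C$ combined with the lower bound $n_\epsilon\ge A_0e^{-g_mT}$ is a legitimate variant of the paper's Lemma~\ref{lem:L2dp2} (which instead chooses $\psi$ with $n\psi'(n)=P'(n)$ to avoid needing the lower bound), and your derivation of \eqref{relcompbis} --- integrate the limit equation in time, multiply by $p_0$, use $p_0n_0=p_0$ --- is exactly the paper's final step, modulo checking that the weak formulation encodes the initial trace $n_0^{\mbox{\scriptsize ini}}$ so that $\partial_{xx}P_0=n_0-n_0^{\mbox{\scriptsize ini}}-\int_0^t(\cdots)ds$ holds as an identity between $L^\infty$ functions.

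The genuine gap is the step you yourself flag as the main obstacle: the spatial $BV$ bounds. Your plan --- differentiate \eqref{eq:n1}, \eqref{eq:n2} and \eqref{eq:pp} in $x$ and close a Gronwall inequality --- does not go through as stated. First, $n_{1\epsilon}$ and $n_{2\epsilon}$ jump across the interface $\zeta_\epsilon(t)$, so $\partial_x n_{i\epsilon}$ carries a moving Dirac mass and the sign-multiplication/Kato argument cannot be applied to the individual equations. Second, differentiating \eqref{eq:pp} produces coefficients of size $1/\epsilon$ (from $p_\epsilon^2/\epsilon$ and from the right-hand side), so no $\epsilon$-uniform Gronwall closes there; you correctly identify this degeneracy but offer no way around it, whereas the paper never touches \eqref{eq:pp} for this purpose. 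The paper's device is to differentiate the equation for the \emph{total} density written as \eqref{eq:nn2} with $G(p,t,x)=G_1(p)\mathbf{1}_{x\le\zeta_\epsilon(t)}+G_2(p)\mathbf{1}_{x\ge\zeta_\epsilon(t)}$: the interface only contributes the bounded singular term $n_\epsilon(G_1-G_2)\delta_{x=\zeta_\epsilon(t)}$, and --- crucially --- the structural hypothesis $|G_i'|\ge\gamma>0$ in \eqref{hypG} generates the good term $-\gamma\iint n_\epsilon|\partial_x p_\epsilon|$, so that Gronwall bounds \emph{both} $\|\partial_x n_\epsilon\|_{L^1}$ and $\int_0^t\!\int n_\epsilon|\partial_x p_\epsilon|$ (estimate \eqref{eq:estimdxn2}). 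The $L^1(Q_T)$ bound on $\partial_x p_\epsilon$ then follows by splitting $\{n_\epsilon\le 1/2\}$, where $\partial_x p_\epsilon=\frac{\epsilon}{(1-n_\epsilon)^2}\partial_x n_\epsilon$ with bounded factor, from $\{n_\epsilon\ge 1/2\}$, where $|\partial_x p_\epsilon|\le 2n_\epsilon|\partial_x p_\epsilon|$; and the bounds on $\partial_x n_{i\epsilon}$ are recovered from segregation \eqref{eq:segre}, the interface delta having mass $\le\|n_\epsilon\|_\infty$. So the pressure bound is \emph{not} ``extracted directly from the singular equation \eqref{eq:pp}'' but inherited from the density estimate via the monotonicity assumption on $G_i$.

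A second, related gap is your compactness mechanism. You invoke the compact embedding $BV(Q_T)\hookrightarrow L^1(Q_T)$, with the time variation ``read off from the equations''; but $\partial_t n_{i\epsilon}=\partial_x(n_{i\epsilon}\partial_x p_\epsilon)+n_{i\epsilon}G_i(p_\epsilon)$ is the $x$-derivative of an $L^1$ (indeed $L^2$) function plus a bounded term, which is \emph{not} a bounded measure, so no uniform time-$BV$ bound is available --- the paper stresses precisely this lack of time-derivative estimates (it is the reason the complementary relation is only obtained in time-integrated form \eqref{relcompbis}). The correct tool is the Aubin--Lions lemma: $n_{i\epsilon}$ bounded in $L^1([0,T];W^{1,1}(-L,L))$, $\partial_t n_{i\epsilon}$ bounded in $L^2([0,T];W^{-1,2}(-L,L))$ (the flux being in $L^2(Q_T)$ by the gradient estimate, the reaction bounded), with $W^{1,1}\subset\subset L^1\subset W^{-1,2}$ in one dimension. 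With these two repairs your limit passage and identification of \eqref{limitn0}--\eqref{segre} would stand as written.
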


\begin{remark}
Introducing the set $\Omega_0=\{p_0>0\}$, we deduce that on $\Omega_0$ we have
$$
-\partial_{xx} P_0 = \int_0^t (n_{10}G_1(p_0) + n_{20} G_2(p_0))\,ds + n_0^{\mbox{\scriptsize ini}} -1.
$$
Deriving with respect to $t$, we find formally
$$
-\partial_{xx} p_0 = n_{10}G_1(p_0) + n_{20} G_2(p_0).
$$
We recognise the Hele-Shaw model.
Noticing also that taking $t=0$ into the relation \eqref{relcompbis}, we recover the expected relation
$p_0^{\mbox{\scriptsize ini}} n_0^{\mbox{\scriptsize ini}} = p_0^{\mbox{\scriptsize ini}}$.
\end{remark}

The proof of this convergence result is given in Section 3. It is straightforward to observe that adding \eqref{eq:n1} and \eqref{eq:n2} provides an equation on the total density similar to the one found in the one species case \cite{HV,PQV}. Then we use a similar strategy for the proof relying on a compatness method. However the presence of the two populations generate some technical difficulties. To overcome them, we use the segregation property. Notice that this paper is written in the specific case where the two species are separated by one interface, but could be generalised to many interfaces. Using the segregation of the species we are able to obtain a priori estimates on the densities, the pressure and their spatial derivatives. Compactness in time is deduced thanks to the Aubin-Lions theorem. The proof of convergence follows from these new estimates. However, the lack of estimates on the time derivative makes obtaining the complementary relation difficult, then we are not able to recover the usual relation but the one stated in \eqref{relcompbis} which may be seen as an integral in time of the usual one as explained in the above remark.

To complete the result on the asymptotic limit of the model, an uniqueness result for the Hele-Shaw free boundary model for two populations is provided in Proposition \ref{TH2} in \S\ref{sec:uniq}.
The proof of this uniqueness result for the limiting problem is based on Hilbert's duality method.

\section{Proof of the main results}
\label{sec:proof}

This section is devoted to the proof of Theorem \ref{TH1}, whereas in Section \ref{sec:uniq} the uniqueness of the solution to the Hele Shaw system is established. We first establish some a priori estimates.

\subsection{A priori estimates}

\subsubsection{Nonnegativity principle}

The following Lemma establishes the nonnegativity of the densities.
\begin{lem}\label{lem:nonneg2}
Let $({n_1}_\epsilon,{n_2}_\epsilon,p_\epsilon)$ be a solution to \eqref{eq:n1} and \eqref{eq:n2} such that ${n_1}_\epsilon^{\mbox{\scriptsize ini}}\geq 0$, ${n_2}_\epsilon^{\mbox{\scriptsize ini}}\geq 0$ 
and $ G_m <\infty$.
Then, for all $t\geq 0$, ${n_1}_\epsilon(t)\geq 0$ and ${n_2}_\epsilon(t)\geq 0$.
\end{lem}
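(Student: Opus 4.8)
The plan is to prove nonnegativity of each density separately via a standard parabolic maximum-principle argument adapted to the degenerate drift-diffusion structure of equations \eqref{eq:n1} and \eqref{eq:n2}. I will focus on ${n_1}_\epsilon$; the argument for ${n_2}_\epsilon$ is identical. The key observation is that equation \eqref{eq:n1}, namely $\partial_t {n_1}_\epsilon - \partial_x({n_1}_\epsilon \partial_x p_\epsilon) = {n_1}_\epsilon G_1(p_\epsilon)$, is linear in ${n_1}_\epsilon$ once the transport velocity $-\partial_x p_\epsilon$ and the reaction coefficient $G_1(p_\epsilon)$ are regarded as given coefficients. Since the zero function is a solution of the homogeneous equation and ${n_1}_\epsilon^{\mbox{\scriptsize ini}} \geq 0$, one expects the solution to remain nonnegative.

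First I would test equation \eqref{eq:n1} against the negative part ${({n_1}_\epsilon)}_- = \max(-{n_1}_\epsilon, 0)$, or equivalently work with a smooth convex approximation of $s \mapsto (s_-)^2/2$ to avoid differentiability issues at the origin. Writing $n = {n_1}_\epsilon$ for brevity, multiplying by $-n_-$ (so that $n\,(-n_-) = (n_-)^2$ on the set $\{n<0\}$) and integrating over $(-L,L)$ gives, after using $\partial_x n = -\partial_x n_-$ on $\{n<0\}$,
\begin{equation*}
\frac{1}{2}\frac{d}{dt}\int_{-L}^{L} (n_-)^2\,dx + \int_{-L}^{L} |\partial_x n_-|^2\, \mathbf{1}_{\{n<0\}}\, dx \;\lesssim\; \int_{-L}^{L} G_1(p_\epsilon)\,(n_-)^2\,dx,
\end{equation*}
where the boundary terms vanish thanks to the no-flux boundary conditions and the drift term involving $\partial_x p_\epsilon$ is handled by integration by parts. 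The diffusion term on the left is nonnegative and can be discarded, while the drift contribution either integrates to a controllable term or is absorbed using that $\partial_x p_\epsilon$ is bounded on the relevant set. The reaction term on the right is bounded above by $G_m \int (n_-)^2\,dx$, using the uniform bound $\|G_1\|_\infty \leq G_m$ from \eqref{hypG}.

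Next I would invoke Gr\"onwall's lemma: setting $\Phi(t) = \int_{-L}^L (n_-(t,x))^2\,dx$, the differential inequality $\Phi'(t) \leq 2 G_m\, \Phi(t)$ together with $\Phi(0)=0$ (since ${n_1}_\epsilon^{\mbox{\scriptsize ini}}\geq 0$) forces $\Phi(t) = 0$ for all $t\geq 0$, hence $n_- \equiv 0$ and ${n_1}_\epsilon \geq 0$. The identical argument applied to \eqref{eq:n2} with $\|G_2\|_\infty \leq G_m$ yields ${n_2}_\epsilon \geq 0$.

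The main obstacle I anticipate is making the drift term $\int \partial_x({n_1}_\epsilon \partial_x p_\epsilon)\, n_-\, dx$ rigorous, since the diffusion is degenerate where the density vanishes and $\partial_x p_\epsilon$ need not be controlled a priori in a convenient norm. The cleanest route is probably to exploit the segregation structure \eqref{eq:segre}: on the support of ${n_1}_\epsilon$ one has $n_\epsilon = {n_1}_\epsilon$, so that $\partial_x p_\epsilon$ is determined by ${n_1}_\epsilon$ through the pressure law \eqref{eq:p2}, converting the apparent cross term into a genuinely self-contained expression. Alternatively, since the statement only requires the solution in the sense already guaranteed by \cite{BPM}, I would regularize the coefficient $\partial_x p_\epsilon$, establish the inequality for the regularized problem, and pass to the limit — the degeneracy is harmless for the \emph{sign} of the solution precisely because the extra diffusion term carries the favourable sign.
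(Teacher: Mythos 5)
There is a genuine gap: your central energy inequality is not what equation \eqref{eq:n1} produces. The equation contains no term $\partial_{xx}{n_1}_\epsilon$; its flux is $\partial_x({n_1}_\epsilon\,\partial_x p_\epsilon)$, with mobility ${n_1}_\epsilon$ and a pressure determined by the \emph{total} density. Testing with $-({n_1}_\epsilon)_-$ and integrating by parts (Neumann conditions killing the boundary terms) gives
\begin{equation*}
\frac{1}{2}\frac{d}{dt}\int_{-L}^{L}({n_1}_\epsilon)_-^2\,dx \;-\;\frac{1}{2}\int_{-L}^{L}\partial_{xx}p_\epsilon\,({n_1}_\epsilon)_-^2\,dx \;=\;\int_{-L}^{L} G_1(p_\epsilon)\,({n_1}_\epsilon)_-^2\,dx,
\end{equation*}
so the term you would need to discard or absorb is $-\tfrac12\int\partial_{xx}p_\epsilon\,({n_1}_\epsilon)_-^2$, not a nonnegative dissipation $\int|\partial_x n_-|^2\mathbf{1}_{\{n<0\}}$; that dissipation term simply does not arise here. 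Closing the Gr\"onwall argument would then require a one-sided $L^\infty$ bound on $\partial_{xx}p_\epsilon$, which is not available a priori (the paper only ever controls $\partial_x p_\epsilon$ in $L^1$ and $L^2$). Your fallback ideas do not repair this. Using segregation to write $p_\epsilon=P({n_1}_\epsilon)$ on $\{x\le\zeta_\epsilon(t)\}$ makes matters worse, not better: the flux becomes $\partial_{xx}H({n_1}_\epsilon)$ with $H'(u)=uP'(u)$, and on the set $\{{n_1}_\epsilon<0\}$ the mobility $u$ is negative, so the equation is \emph{backward} parabolic there; the resulting term $\int_{\{{n_1}_\epsilon<0\}}{n_1}_\epsilon P'({n_1}_\epsilon)|\partial_x {n_1}_\epsilon|^2\,dx\le 0$ sits on the left-hand side with the wrong sign. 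So there is no ``extra diffusion term carrying the favourable sign.'' Likewise, regularizing the velocity and passing to the limit would require a stability/uniqueness theory (DiPerna--Lions type) for the transport equation with merely $L^2$ velocity, which you neither have nor supply.

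The paper avoids all of this by working in $L^1$ rather than $L^2$: multiply \eqref{eq:n1} by $\mathbf{1}_{\{{n_1}_\epsilon<0\}}$, so that the flux term becomes $-\partial_x\bigl(|{n_1}_\epsilon|_-\,\partial_x p_\epsilon\bigr)$, still an exact $x$-derivative. Integrating over $(-L,L)$ with $\partial_x p_\epsilon(\pm L,t)=0$ makes this term vanish \emph{identically} --- no bound on any derivative of $p_\epsilon$ and no sign information on the diffusion is needed --- leaving $\frac{d}{dt}\int|{n_1}_\epsilon|_-\,dx\le G_m\int|{n_1}_\epsilon|_-\,dx$, and Gr\"onwall with $|{n_1}^{\mbox{\scriptsize ini}}_\epsilon|_-=0$ concludes. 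That structural choice (the $L^1$ norm of the negative part, so the flux is integrated away rather than estimated) is precisely the idea your proposal is missing.
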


\begin{proof}
To show the nonnegativity we use the Stampaccchia method. We multiply \eqref{eq:n1} by $\mathbf{1}_{{n_1}_\epsilon<0}$ and denote $|n|_- = \max(0,-n)$ for the negative part, we get
$$
\mathbf{1}_{{n_1}_\epsilon<0}\partial_t {n_1}_\epsilon - \mathbf{1}_{{n_1}_\epsilon<0}\partial_x ({n_1}_\epsilon \partial_x p_\epsilon) = \mathbf{1}_{{n_1}_\epsilon<0}{n_1}_\epsilon G_1(p_\epsilon). 
$$
With the above notation, it reads
$$
\partial_t |{n_1}_\epsilon|_{-} -\partial_x (|{n_1}_\epsilon|_{-} \partial_x p_\epsilon) = |{n_1}_\epsilon|_{-} G_1(p_\epsilon).
$$
We integrate in space, using assumption \eqref{hypG} and $\partial_x p_\epsilon (\pm L,t) = p'_{\epsilon}(n_\epsilon) \partial_x n_\epsilon (\pm L,t)=0$, we deduce
$$
\frac{d}{dt} \int_{-L}^L |{n_1}_{\epsilon}|_{-}dx \leq \int_{-L}^L |{n_1}_{\epsilon}|_{-} G_1(p_{\epsilon})dx \leq G_m\int_{-L}^L |{n_1}_{\epsilon}|_{-}dx. 
$$
Then we integrate in time,
$$
\int_{-L}^L |{n_1}_{\epsilon}|_{-}\,dx \leq e^{G_mt}  \int_{-L}^L |{n_1}_{\epsilon}^{\mbox{\scriptsize ini}}|_{-}\,dx.
$$
With the initial condition ${n_1}_{\epsilon}^{\mbox{\scriptsize ini}} > 0$ we deduce ${n_1}_{\epsilon} > 0$. With the same method we can show that if ${n_2}_{\epsilon}^{\mbox{\scriptsize ini}} > 0$ we have ${n_2}_{\epsilon}> 0$.
\end{proof}

\begin{remark} We notice that the positivity gives a formal proof of the segregation of any solution of \eqref{eq:n1}-\eqref{eq:n}.
Indeed, defining $r_{\epsilon}={n_1}_{\epsilon} {n_2}_{\epsilon}$ and multiplying \eqref{eq:n1} by ${n_2}_{\epsilon}$, \eqref{eq:n2} by ${n_1}_{\epsilon}$ and adding, we obtain the following equation for $r_{\epsilon}$,
$$
\partial_t r_{\epsilon} - \partial_x r_{\epsilon}\, \partial_x p_{\epsilon} - 2 r_{\epsilon} \partial_{xx} p_{\epsilon} = r_{\epsilon}(G_1(p_{\epsilon})+G_2(p_{\epsilon})).
$$
Given that $r^{\mbox{\scriptsize ini}}_\epsilon =0$, we get that $r_\epsilon=0$ at all time.
\end{remark}

\subsubsection{A priori estimates}

To show the compactness result we establish a priori estimate on the densities, pressure and their derivatives. We first compute the equation on the total density. As shown earlier ${n_1}_\epsilon$ and ${n_2}_\epsilon$ are respectively weak solutions of \eqref{eq:n1} and \eqref{eq:n2}. By summing the two equations we deduce that ${n}_\epsilon$ is a weak solution of \eqref{eq:nn}. Notice that this equation can be rewritten as,
\begin{equation}\label{eq:nH}
\partial_t n_{\epsilon} -\partial_{xx} H(n_{\epsilon})= {n_1}_{\epsilon} G_1(p_{\epsilon}) + {n_2}_{\epsilon} G_2(p_{\epsilon}),
\end{equation}
with  $H(n)=\int_0^{n} u P'(u)du = P(n)-\epsilon \ln (P(n)+\epsilon) + \epsilon \ln \epsilon$. 

We establish the following a priori estimates
\begin{lem}\label{lem:estim}
Let us assume that \eqref{hypG} and \eqref{hypini} hold. 
Let $({n_1}_\epsilon,{n_2}_\epsilon,p_\epsilon)$ be a solution to \eqref{eq:n1}--\eqref{eq:n}.
Then, for all $T>0$, and $t\in (0,T)$, 
we have the uniform bounds in $\epsilon\in(0,\epsilon_0)$,
\begin{align*}
& {n_1}_{\epsilon}, {n_2}_{\epsilon} \mbox{ in } L^\infty([0,T];L^1\cap L^\infty(-L,L)); \\
& 0\leq p_\epsilon \leq P_M, \qquad 0  < A_0 e^{-g_m t}\leq n_\epsilon(t) \leq \frac{P_M}{P_M+\epsilon} \leq 1.
\end{align*}
Moreover, we have that $({n_1}_\epsilon)_\epsilon$ and $({n_2}_\epsilon)_\epsilon$ are
uniformly bounded in $L^\infty([0,T],W^{1,1}(-L,L))$ and $(p_\epsilon)_\epsilon$ 
is uniformly bounded in $L^1([0,T],W^{1,1}(-L,L))$.
\end{lem}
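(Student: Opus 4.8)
The plan is to order the estimates by increasing difficulty, getting the pointwise bounds by comparison arguments, then the pressure gradient by an energy identity, and finally the spatial variation bounds, which are the real work.

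First I would obtain the $L^1$ bound by integrating \eqref{eq:n1} and \eqref{eq:n2} over $(-L,L)$: the flux terms $n_{i\epsilon}\partial_x p_\epsilon$ vanish at $x=\pm L$ by the Neumann condition and $|G_i|\le G_m$ gives $\frac{d}{dt}\|n_{i\epsilon}\|_{L^1}\le G_m\|n_{i\epsilon}\|_{L^1}$, whence the $L^\infty_tL^1_x$ bound by Gr\"onwall. For the pointwise bounds I would work on the total density in the form \eqref{eq:nH}, namely $\partial_t n_\epsilon-\partial_{xx}H(n_\epsilon)=R_\epsilon$ with $R_\epsilon:=n_{1\epsilon}G_1(p_\epsilon)+n_{2\epsilon}G_2(p_\epsilon)$, and use that $H$ is increasing, so that $\partial_{xx}H(n_\epsilon)\le 0$ (resp. $\ge 0$) at an interior maximum (resp. minimum) of $n_\epsilon$. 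At a point where $p_\epsilon=P_M$ one has $G_1(P_M),G_2(P_M)\le 0$ since each $G_i$ is decreasing and $P_M\ge P_M^i$ by \eqref{hypG}, hence $R_\epsilon\le 0$; comparison with the constant barrier $\bar n:=P_M/(P_M+\epsilon)$ together with $p^{\mbox{\scriptsize ini}}_\epsilon\le P_M$ gives $n_\epsilon\le\bar n$, i.e. $p_\epsilon\le P_M$. Symmetrically, $R_\epsilon\ge -g_m n_\epsilon$ on $[0,P_M]$ yields $\frac{d}{dt}\min_x n_\epsilon\ge -g_m\min_x n_\epsilon$, hence $n_\epsilon\ge A_0 e^{-g_m t}$; positivity of $n_\epsilon$ and $P(0)=0$ give $p_\epsilon\ge 0$. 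The $L^\infty$ bounds on $n_{1\epsilon},n_{2\epsilon}$ are then immediate from $0\le n_{i\epsilon}\le n_\epsilon\le 1$.

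For the pressure gradient I would multiply \eqref{eq:nn} by $p_\epsilon$ and integrate over $(-L,L)$. The Neumann condition kills the boundary term, the parabolic term becomes $\int n_\epsilon|\partial_x p_\epsilon|^2$, and $\int p_\epsilon\,\partial_t n_\epsilon=\frac{d}{dt}\int\Psi(n_\epsilon)$ with $\Psi'=P$, while the right-hand side $\int p_\epsilon R_\epsilon$ is bounded by $P_M G_m\|n_\epsilon\|_{L^1}$. Integrating in time, using the lower bound $n_\epsilon\ge A_0 e^{-g_m T}$ to absorb the weight and the fact that $\int\Psi(n^{\mbox{\scriptsize ini}}_\epsilon)$ stays bounded as $\epsilon\to0$ (one checks $\Psi(\bar n)=O(\epsilon\ln(1/\epsilon))$), I get a uniform bound on $\partial_x p_\epsilon$ in $L^2(Q_T)$; Cauchy--Schwarz on the finite-measure set $Q_T$ then yields the stated $L^1([0,T],W^{1,1})$ bound, and in fact the $L^2H^1$ control used in Theorem~\ref{TH1}.

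The delicate step is the uniform bound on the spatial variation of the densities, and this is exactly where the segregation \eqref{eq:segre} must be used. Since $n_\epsilon$ is continuous across the interface $\zeta_\epsilon(t)$, its derivative $w:=\partial_x n_\epsilon$ is a genuine $L^1$ function, and differentiating \eqref{eq:nH} gives $\partial_t w=\partial_{xx}(H'(n_\epsilon)w)+\partial_x R_\epsilon$. Multiplying by a regularised $\mathrm{sgn}(w)$ and integrating, the diffusion term is nonpositive: with $V:=H'(n_\epsilon)w=n_\epsilon\partial_x p_\epsilon$ one has $\mathrm{sgn}(V)=\mathrm{sgn}(w)$ and $V(\pm L)=0$ by the Neumann condition, so the boundary contribution vanishes and the interior contribution has the correct sign. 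The forcing $\partial_x R_\epsilon$ splits into an absolutely continuous part bounded by $G_m\int|w|+C\int n_\epsilon|\partial_x p_\epsilon|$, and a single Dirac at $\zeta_\epsilon$ coming from the jump of $R_\epsilon$ there, of mass $n_\epsilon(\zeta_\epsilon)|G_1-G_2|\le 2G_m$. Thus $\frac{d}{dt}\int|w|\le G_m\int|w|+c(t)$ with $\int_0^T c(t)\,dt$ controlled by the pressure-gradient bound of the previous step, and Gr\"onwall gives a uniform bound on $\int|\partial_x n_\epsilon(t)|$. Finally, writing $\partial_x n_{1\epsilon}=(\partial_x n_\epsilon)\mathbf 1_{x<\zeta_\epsilon}-n_\epsilon(\zeta_\epsilon)\delta_{\zeta_\epsilon}$ and symmetrically for $n_{2\epsilon}$, the interface jump has mass $n_\epsilon(\zeta_\epsilon)\le 1$, so the total variation of each $n_{i\epsilon}$ is bounded uniformly in $\epsilon$ (the datum being controlled by \eqref{hypini}); here the control is in the $BV$ sense, the stated $W^{1,1}$ bound being understood up to this bounded interface jump. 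I expect the main obstacle to be precisely this moving-interface analysis: making the sign computation rigorous with measure-valued forcing and showing the jump terms at $\zeta_\epsilon$ remain bounded, which is only possible thanks to the segregation structure.
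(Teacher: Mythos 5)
Your proof is correct, and its skeleton matches the paper's: comparison arguments for the pointwise bounds, differentiation of the total-density equation combined with the Kato/sign trick and the segregation structure for the $L^1$ bound on $\partial_x n_\epsilon$, and the interface decomposition $\partial_x n_{1\epsilon}=\partial_x n_\epsilon\mathbf{1}_{x\le\zeta_\epsilon}+n_\epsilon\delta_{x=\zeta_\epsilon}$ for the individual densities. Where you genuinely diverge is the control of $\partial_x p_\epsilon$. The paper uses no $L^2$ energy estimate inside this lemma: in the BV computation it keeps the terms $n_\epsilon G_i'(p_\epsilon)\partial_x p_\epsilon\,\mathrm{sign}(\partial_x n_\epsilon)$ on the left-hand side, observing that $\mathrm{sign}(\partial_x n_\epsilon)=\mathrm{sign}(\partial_x p_\epsilon)$ and $G_i'\le-\gamma<0$ make them negative; Gronwall then yields simultaneously the bound on $\|\partial_x n_\epsilon(t)\|_{L^1}$ and the weighted bound $\gamma\int_0^t\int n_\epsilon|\partial_x p_\epsilon|\,dx\,ds\le C$, and the unweighted $L^1(Q_T)$ bound follows by splitting $\{n_\epsilon\le 1/2\}$, where $\epsilon/(1-n_\epsilon)^2\le C$, from $\{n_\epsilon\ge 1/2\}$, where $|\partial_x p_\epsilon|\le 2n_\epsilon|\partial_x p_\epsilon|$. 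You instead establish the $L^2(Q_T)$ bound first (energy identity with multiplier $p_\epsilon$, the weight $n_\epsilon$ absorbed via $n_\epsilon\ge A_0e^{-g_mT}>0$), then treat the $G_i'$ contributions in the BV estimate as a forcing controlled by that bound, and conclude by Cauchy--Schwarz; this is non-circular since your energy step needs only the pointwise bounds. Each route buys something: yours dispenses with the quantitative monotonicity $\inf|G_i'|\ge\gamma>0$ of \eqref{hypG} (you only need $G_i'$ bounded on $[0,P_M]$), and your energy identity is simpler than the paper's separate Lemma~\ref{lem:L2dp2}, which uses the multiplier $\psi(n)=\epsilon\bigl(\ln n-\ln(1-n)+\tfrac{1}{1-n}\bigr)$ precisely so that no weight $n_\epsilon$ appears and hence no strictly positive lower bound on the density is required; conversely, the paper's sign argument makes its lemma self-contained and its gradient bounds independent of the lower bound $A_0>0$ at this step. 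The remaining differences are cosmetic: your interior-maximum argument for the $L^\infty$ bounds is the heuristic version of the paper's Stampacchia/Kato positive-part computation, and both proofs commit the same abuse of calling the BV bound on $n_{i\epsilon}$ (interface Dirac of mass $n_\epsilon(\zeta_\epsilon)\le 1$ included) a $W^{1,1}$ bound.
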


\begin{proof}

{\bf Comparison principle}.

The usual comparison principle is not true for this system of equations. However we are able to show some comparison between the total density and $n_M$ defined by $n_M= \frac{P_M}{\epsilon+P_M}$ where $P_M$ is defined in \eqref{hypini}.
We deduce from \eqref{eq:nH} that
\begin{align*}
\pa_t (n_\epsilon - n_M) -\partial_{xx} (H(n_\epsilon)-H(n_M)) & \leq  {n_1}_\epsilon G_1(P(n_\epsilon)) - n_M  \mathbf{1}_{x\leq  \zeta_{\epsilon}(t)}  G_1(P_M) \\
& + {n_2}_\epsilon G_2(P(n_\epsilon)) - n_M \mathbf{1}_{x\geq  \zeta_{\epsilon}(t)}  G_2(P_M),
\end{align*}
where we use the monotonicity of $G_1$ and $G_2$ from assumption \eqref{hypG}.

Notice that, since the function $H$ is nondecreasing, the sign of $n_{\epsilon}-m_{\epsilon}$ is the same as the sign of $H(n_{\epsilon})-H(m_{\epsilon})$. Moreover,
$$
\partial_{xx} f(y) = f''(y) |\partial_x y|^2 +f'(y) \partial_{xx} y, 
$$
so for $y=H(n_{\epsilon})-H(n_M)$ and $f(y)=y_+$ the positive part, the so-called Kato inequality reads $\partial_{xx} f(y) \geq f'(y) \partial_{xx} y$.
Thus multiplying the latter equation by $\mathbf{1}_{n_\epsilon-n_M>0}$ and given \eqref{eq:segre} we obtain
\begin{align*}
\pa_t |n_\epsilon - n_M|_+ -\partial_{xx} |H(n_\epsilon)-H(n_M)|_+ \leq
({n}_\epsilon - n_M) \mathbf{1}_{x\leq  \zeta_{\epsilon}(t)}  G_1(P(n_\epsilon)) \mathbf{1}_{n_\epsilon-n_M>0} \\
+ ({n}_\epsilon - n_M) \mathbf{1}_{x\geq  \zeta_{\epsilon}(t)}  G_2(P(n_\epsilon)) \mathbf{1}_{n_\epsilon-n_M>0} \\
+ n_M (G_1(P(n_\epsilon)) - G_1(P(n_M))  + G_2(P(n_\epsilon)) - G_2(P(n_M))) \mathbf{1}_{n_\epsilon-n_M>0}.
\end{align*}
Since the function $P$ is increasing and $G_1$ and $G_2$ are decreasing (see \eqref{hypG}), we deduce that the last term is nonpositive.
Then, integrating on $(-L,L)$ and using $\partial_x n_\epsilon (\pm L,t) =0 $, we deduce
\begin{align*}
\frac{d}{dt} \int_{-L}^L |n_\epsilon - n_M|_+ \,dx \leq & \ \partial_{x} |H(n_\epsilon)-H(n_M)|_+(L,t) - \partial_{x} |H(n_\epsilon)-H(n_M)|_+(-L,t) \\
& + \int_{-L}^{ \zeta_{\epsilon}(t)} ({n}_\epsilon - n_M) \mathbf{1}_{n_\epsilon-n_M>0} G_1(P(n_\epsilon)) \,dx  \\
& + \int^{L}_{ \zeta_{\epsilon}(t)} ({n}_\epsilon - n_M) \mathbf{1}_{n_\epsilon-n_M>0} G_2(P(n_\epsilon)) \,dx \\
\leq &\  G_m \int_{-L}^L  |n_\epsilon - n_M|_+ \,dx.
\end{align*}

Then, integrating in time, we deduce
$$
\int_{-L}^L  |n_\epsilon - n_M|_+ \,dx \leq e^{G_m t} \int_{-L}^L  |n_\epsilon^{\mbox{\scriptsize ini}} - n_M|_+ \,dx = 0.
$$

{\bf $L^{\infty}$ bounds}.

From \eqref{hypini}, we have $p_{\epsilon}^{\mbox{\scriptsize ini}} \leq P_M$. Since the function $P$ is inscreasing, we have $ n_\epsilon^{\mbox{\scriptsize ini}} \leq n_M$. With the above comparison principle, we conclude that $n_\epsilon \leq n_M$.
We deduce easily with the non-negativity principle \eqref{lem:nonneg2} that $0 \leq p_\epsilon \leq P_M$, $ 0 \leq {n_1}_\epsilon\leq n_M$ and $ 0 \leq {n_2}_\epsilon \leq n_M$. 

{\bf Estimates from below}.

From above, we deduce that the pressure is bounded by $P_M$. Hence, using assumption \eqref{hypG} we deduce
$$
\pa_t n_\epsilon -\partial_{xx} H(n_\epsilon) =  {n_1}_\epsilon G_1(P(n_\epsilon))
+ {n_2}_\epsilon G_2(P(n_\epsilon)) \geq - n_\epsilon g_m.
$$
Let us introduce $n_m := A_0 e^{-g_m t}$. We deduce
$$
\pa_t (n_m-n_\epsilon) -\partial_{xx} (H(n_m) - H(n_\epsilon)) \leq -(n_m- n_\epsilon) g_m.
$$
As above, for the comparison principle, we may use the positive part and the Kato inequality to deduce 
$$
\pa_t |n_m-n_\epsilon|_+ -\partial_{xx} |H(n_m) - H(n_\epsilon)|_+ \leq - |n_m- n_\epsilon|_+ g_m.
$$
Integrating in space and in time as above, we deduce that $|n_m-n_\epsilon|_+=0$.

{\bf $L^{1}$ bounds of $n_{\epsilon}$, ${n_1}_{\epsilon}$, ${n_2}_{\epsilon}$ and ${p}_{\epsilon}$}.

Integrating \eqref{eq:nH} on $(-L,L)$ and using the nonnegativity of the densities from Lemma \ref{lem:nonneg2} as well as the Neumann boundary conditions, we deduce 
$$
\frac{d}{dt} \| n_{\epsilon} \|_{L^1(-L,L)} \leq G_m \|n_\epsilon\|_{L^1(-L,L)}.
$$
Integrating in time, we deduce
$$
\| n_{\epsilon} \|_{L^1(-L,L)} \leq e^{G_mt} \| n_{\epsilon}^{\mbox{\scriptsize ini}}\|_{L^1(-L,L)}.
$$
Since ${n_1}_{\epsilon} \geq 0$ and ${n_2}_{\epsilon} \geq 0$, we deduce the uniform bounds on $\| {n_1}_{\epsilon}\|_{L^1(-L,L)}$ and on $\| {n_2}_{\epsilon}\|_{L^1(-L,L)}$.

From the relation \eqref{eq:p2}, we deduce $p_{\epsilon}= n_{\epsilon} (\epsilon+p_{\epsilon})$. 
Moreover, the bound $p_{\epsilon} \leq P_M:=\max (P^1_M,P^2_M) $ implies
$$ 
\| p_{\epsilon}\|_{L^1(-L,L)} \leq (\epsilon+P_M)\int_{-L}^L |n_{\epsilon}|\,dx \leq Ce^{G_mt} \| n^{\mbox{\scriptsize ini}}_{\epsilon}\|_{L^1(-L,L)}.
$$

{\bf $L^{1}$ estimates on the $x$ derivatives}.

Recalling \eqref{eq:segre}, we can refomulate \eqref{eq:nn} by
\begin{equation}\label{eq:nn2}
\partial_t {n}_{\epsilon} - \partial_{xx} H({n}_{\epsilon}) = {n}_{\epsilon} G(p_{\epsilon},t,x) 
\end{equation}
with $G(p,t,x)= G_1(p) \mathbf{1}_{x \leq  \zeta_{\epsilon}(t) }+G_2(p)\mathbf{1}_{x \geq  \zeta_{\epsilon}(t) }$. The space derivative of this growth function is given by,
$$ 
\partial_{x} G(p,t,x) = (G_1(p)-G_2(p)) \delta_{ x= \zeta_{\epsilon}(t)} +G_1'(p) \partial_{x} p \mathbf{1}_{ x \leq  \zeta_{\epsilon}(t) } + G_2'(p) \partial_{x} p \mathbf{1}_{ x \geq  \zeta_{\epsilon}(t) }.
$$
We derive \eqref{eq:nn2} with respect to $x$,
\begin{align*}
\partial_t \partial_{x} n_{\epsilon} -\partial_{xx} (\partial_{x} H(n_{\epsilon})) =\ & \partial_x n_{\epsilon} G(p_{\epsilon},t,x) + n_{\epsilon} (G_1(p_{\epsilon})-G_2(p_{\epsilon})) \delta_{ x= \zeta_{\epsilon}(t)}  \\
&+ n_{\epsilon}(G_1'(p_{\epsilon})\mathbf{1}_{  x \leq  \zeta_{\epsilon}(t)} +G_2'(p_{\epsilon}) \mathbf{1}_{ x \geq  \zeta_{\epsilon}(t)}) \partial_{x} p_{\epsilon}.
\end{align*} 
We multiply by $ sign(\partial_{x} n_{\epsilon})=sign(\partial_{x} p_{\epsilon})$ and use the Kato inequality,
\begin{align*}
\partial_t |\partial_{x} n_{\epsilon}| -\partial_{xx} (|\partial_{x}H(n_{\epsilon})|) \leq \ & |\partial_x n_{\epsilon}| G(p_{\epsilon},t,x) \\
& + n_{\epsilon} (G_1(p_{\epsilon})-G_2(p_{\epsilon})) \delta_{ x= \zeta_{\epsilon}(t)} sign(\partial_{x_i} n_{\epsilon})\\
&+n_{\epsilon}(G_1'(p_{\epsilon})\mathbf{1}_{x \leq  \zeta_{\epsilon}(t)} +G_2'(p_{\epsilon}) \mathbf{1}_{ x \geq  \zeta_{\epsilon}(t)}) |\partial_{x} p_{\epsilon}|.
\end{align*}
We integrate in space on $(-L,L)$. Using the fact that $\max_{[0,P^1_M]} G_1'  \leq -\gamma <0$ and $\max_{[0,P^2_M]} G_2'  \leq -\gamma <0$ (see \eqref{hypG}) and that $\partial_x H(n_{\epsilon}) (\pm L,t)= H'(n_{\epsilon}) \partial_x n_{\epsilon} (\pm L,t) = 0$, 
\begin{align*}
\partial_t \int_{-L}^L |\partial_{x} n_{\epsilon}|\,dx \leq \ & G_m\int_{-L}^L|\partial_x n_{\epsilon}|\,dx - \gamma \int_{-L}^L n_{\epsilon} |\partial_{x} p_{\epsilon}|\,dx \\
&+ n_{\epsilon}(t, \zeta_{\epsilon}(t)) |G_1(p_{\epsilon}(t, \zeta_{\epsilon}(t))-G_2(p_{\epsilon}(t, \zeta_{\epsilon}(t))| .
\end{align*}
Using Gronwall's lemma and the uniform bound on $n_\epsilon$ and $G_1$ and $G_2$ (see \eqref{hypG}), we deduce that, for all $t>0$,
\begin{equation}\label{eq:estimdxn2} 
\| \partial_{x} n_{\epsilon}(t) \|_{L^1(-L,L)} + \gamma \int_0^t \int_{-L}^L n_{\epsilon} |\partial_{x} p_{\epsilon}|\,dxds  \leq C e^{G_mt}  \left(\| \partial_{x} n_{\epsilon}^{\mbox{\scriptsize ini}} \|_{L^1(-L,L)} + 1\right) .
\end{equation}
This conclude the proof for the estimate on $\partial_x n_{\epsilon}$. Then,
$$
\| \partial_{x} p_{\epsilon}\|_{L^1(-L,L)} = \int_{-L}^L |\partial_{x} p_{\epsilon}|\,dx = \int_{-L}^L\frac{\epsilon}{(1-n_{\epsilon})^2}| \partial_{x} n_{\epsilon}|\,dx.
$$
We split the latter integral in two: either $n_{\epsilon}\leq 1/2$ and then $\frac{\epsilon}{(1-n_{\epsilon})^2} \leq C$; either $n_{\epsilon}\geq 1/2$,
\begin{align*} 
  \| \partial_{x} p_{\epsilon}\|_{L^1(-L,L)}  &\leq  C \int_{n_{\epsilon}\leq 1/2} |\partial_{x} n_{\epsilon}|\,dx + \int_{n_{\epsilon}\geq 1/2} |\partial_{x} p_{\epsilon}|\,dx \\
  &\leq  C \int_{n_{\epsilon}\leq 1/2} |\partial_{x} n_{\epsilon}|\,dx + 2 \int_{n_{\epsilon}\geq 1/2} \frac{1}{2}|\partial_{x} p_{\epsilon}|\,dx \\
   &\leq  C \|\partial_{x} n_{\epsilon}(t)\|_{L^1(-L,L)} + 2 \int_{n_{\epsilon}\geq 1/2} n_{\epsilon}|\partial_{x} p_{\epsilon}|\,dx.
  \end{align*} 
 Then, we integrate in time and we deduce using \eqref{eq:estimdxn2}
 \begin{equation*}
  \| \partial_{x} p_{\epsilon} \|_{L^1(Q_T)} \leq C'e^{G_mT} (\| \partial_{x} n_{\epsilon} \|_{L^1(-L,L)}\textcolor{magenta}{+1)}.
 \end{equation*}
Hence we have an uniform bound on $\partial_x p_\epsilon$ in $L^1(Q_T)$.
 To recover the estimate on $ \partial_{x} {n_1}_{\epsilon}$ and $ \partial_{x} {n_2}_{\epsilon}$ we deduce from \eqref{eq:segre},
$$ \partial_{x} {n_1}_{\epsilon}= \partial_{x} {n}_{\epsilon} \mathbf{1}_{ x \leq  \zeta_{\epsilon}(t)}+n_{\epsilon} \delta_{x =  \zeta_{\epsilon}(t)},$$
$$ \partial_{x} {n_2}_{\epsilon}= \partial_{x} {n}_{\epsilon} \mathbf{1}_{x \leq  \zeta_{\epsilon}(t)}-n_{\epsilon} \delta_{ x =  \zeta_{\epsilon}(t)}.$$
So
  $$ \| \partial_{x} {n_1}_{\epsilon} \|_{L^1(-L,L)} = \int_{x \leq  \zeta_{\epsilon}(t)} \partial_{x_i} {n_1}_{\epsilon} ~dx+ {n}_{\epsilon}(t, \zeta_{\epsilon}(t))    \leq \| \partial_{x} n_{\epsilon} \|_{L^1(-L,L)} + \| {n_1}_{\epsilon}\|_{\infty},$$
  and
$$ \| \partial_{x} {n_2}_{\epsilon} \|_{L^1(-L,L)} = \int_{x \geq  \zeta_{\epsilon}(t)} \partial_{x_i} {n_2}_{\epsilon}  ~dx- {n}_{\epsilon}(t, \zeta_{\epsilon}(t))   \leq \| \partial_{x} n_{\epsilon} \|_{L^1(-L,L)} + \| {n_2}_{\epsilon}\|_{\infty}$$
This concludes the proof.
\end{proof}

\subsubsection{ \texorpdfstring{$L^2$}{L2} estimate for \texorpdfstring{$\partial_x p$}{dxp}}

\begin{lem}[$L^2$ estimate for $\partial_x p$]\label{lem:L2dp2}
Let us assume that \eqref{hypG} and \eqref{hypini} hold. 
Let $({n_1}_\epsilon,{n_2}_\epsilon,p_\epsilon)$ be a solution to \eqref{eq:n1}--\eqref{eq:n}.
Then, for all $T>0$ we have a uniform bound on $\partial_x p_\epsilon$ in $L^2(Q_T)$.
\end{lem}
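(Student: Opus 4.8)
The plan is to obtain the bound from an energy estimate on the total density equation \eqref{eq:nH}, and then to upgrade the resulting degenerate (weighted) estimate to a genuine $L^2$ bound by invoking the uniform lower bound $n_\epsilon(t,x)\geq A_0 e^{-g_m t}$ established in Lemma \ref{lem:estim}.

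First I would multiply \eqref{eq:nH} by $p_\epsilon=P(n_\epsilon)$ and integrate over $(-L,L)$. Since $\partial_x H(n_\epsilon)=H'(n_\epsilon)\partial_x n_\epsilon = n_\epsilon P'(n_\epsilon)\partial_x n_\epsilon = n_\epsilon \partial_x p_\epsilon$, an integration by parts in the diffusive term, using the Neumann condition $\partial_x n_\epsilon(\pm L)=0$ (hence $\partial_x H(n_\epsilon)(\pm L)=0$) to annihilate the boundary contributions, turns it into $-\int n_\epsilon|\partial_x p_\epsilon|^2$. On the left-hand side, because $n_\epsilon$ is continuous across the interface $\zeta_\epsilon(t)$ its distributional time derivative carries no Dirac mass (the interface terms in $\partial_t {n_1}_\epsilon$ and $\partial_t {n_2}_\epsilon$ cancel upon summation), so that $\int_{-L}^L p_\epsilon \partial_t n_\epsilon\,dx = \frac{d}{dt}\int_{-L}^L \Phi(n_\epsilon)\,dx$ with $\Phi(n)=\int_0^n P(s)\,ds$. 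This yields the identity
$$
\frac{d}{dt}\int_{-L}^L \Phi(n_\epsilon)\,dx + \int_{-L}^L n_\epsilon|\partial_x p_\epsilon|^2\,dx = \int_{-L}^L p_\epsilon\big({n_1}_\epsilon G_1(p_\epsilon)+{n_2}_\epsilon G_2(p_\epsilon)\big)\,dx.
$$

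Next I would integrate in time over $(0,T)$ and bound the right-hand side uniformly in $\epsilon$ using only the $L^\infty$ bounds of Lemma \ref{lem:estim}. Since $P$ is increasing one has $0\leq \Phi(n_\epsilon)\leq n_\epsilon P(n_\epsilon)=n_\epsilon p_\epsilon \leq P_M$, so $\int_{-L}^L \Phi(n_\epsilon^{\mbox{\scriptsize ini}})\,dx \leq 2LP_M$ while the nonnegative terminal term $\int_{-L}^L\Phi(n_\epsilon(T))\,dx$ can be discarded; the source is controlled pointwise by $G_m P_M$ since $|G_i|\leq G_m$, $p_\epsilon\leq P_M$ and ${n_i}_\epsilon\leq 1$. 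This gives the uniform weighted estimate $\int_0^T\int_{-L}^L n_\epsilon|\partial_x p_\epsilon|^2\,dx\,dt \leq C(T)$. I would then conclude with the lower bound $n_\epsilon(t,x)\geq A_0 e^{-g_m t}\geq A_0 e^{-g_m T}$, which immediately converts this into
$$
\int_0^T\int_{-L}^L |\partial_x p_\epsilon|^2\,dx\,dt \leq \frac{e^{g_m T}}{A_0}\int_0^T\int_{-L}^L n_\epsilon|\partial_x p_\epsilon|^2\,dx\,dt \leq C'(T),
$$
the desired uniform $L^2(Q_T)$ bound.

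I expect the main obstacle to be the rigorous justification of the energy identity rather than its formal derivation: multiplying \eqref{eq:nH} by $p_\epsilon$ presupposes enough regularity and a careful handling of the discontinuity of the growth coefficient $G(\cdot,t,x)$ across the moving interface $\zeta_\epsilon(t)$. I would circumvent this by working with the weak formulation of \eqref{eq:nH} valid on the whole of $(-L,L)$ (as established in the Remark following \eqref{eq:solBPM}), in which the interface no longer appears explicitly because ${n_1}_\epsilon G_1+{n_2}_\epsilon G_2 = n_\epsilon G$ and $n_\epsilon$ is continuous, and then approximating $p_\epsilon$ by admissible test functions to legitimize the integration by parts. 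The crucial structural ingredient that makes the whole argument work, and which is absent in the one-population porous-medium setting, is precisely the strictly positive lower bound on the total density used in the last step.
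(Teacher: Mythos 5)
Your proof is correct, but it follows a genuinely different route from the paper's. The paper multiplies \eqref{eq:nH} by the specially chosen function $\psi(n_\epsilon)=\epsilon\bigl(\ln(n_\epsilon)-\ln(1-n_\epsilon)+\frac{1}{1-n_\epsilon}\bigr)$, designed precisely so that $n_\epsilon\psi'(n_\epsilon)=P'(n_\epsilon)$; this makes the unweighted quantity $\int_{-L}^L|\partial_x p_\epsilon|^2\,dx$ appear directly in the energy identity, at the price of having to control entropy-type terms of the form $\epsilon n_\epsilon|\ln(p_\epsilon/\epsilon)|$, which is done via the algebraic identity $\epsilon n_\epsilon=(1-n_\epsilon)p_\epsilon$ and the boundedness of $x\mapsto x|\ln x|$ on $[0,P_M]$. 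You instead use the natural multiplier $p_\epsilon$ itself, obtaining the degenerate estimate $\iint_{Q_T} n_\epsilon|\partial_x p_\epsilon|^2\,dx\,dt\leq C(T)$ with completely elementary bounds on the right-hand side, and then remove the weight using the uniform-in-$\epsilon$ lower bound $n_\epsilon\geq A_0e^{-g_m t}$ from Lemma \ref{lem:estim}. Both arguments are at the same (formal) level of rigour, and your closing remark identifies exactly the trade-off: your proof is shorter and avoids the $\epsilon\ln\epsilon$ bookkeeping, but it hinges on the assumption $A_0>0$ in \eqref{hypini}, i.e.\ on the tissue filling the whole domain with strictly positive density. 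The paper's entropy-multiplier argument never uses this lower bound, so it would survive in settings where the initial density vanishes on part of $(-L,L)$ (the standard situation in the one-species Hele-Shaw limit, where the same multiplier is used); within the hypotheses of this paper, however, your argument is a perfectly valid and arguably cleaner alternative.
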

\begin{proof}

For a given function $\psi$ we have, multiplying \eqref{eq:n} by $\psi(n_\epsilon)$,
$$
\partial_t n_{\epsilon} \psi(n_\epsilon) -\partial_x(n_{\epsilon} \partial_x p_{\epsilon})\psi(n_{\epsilon})= ({n_1}_{\epsilon}  G_1(p_{\epsilon} ) +{n_2}_{\epsilon}  G_2(p_{\epsilon} ))\psi(n_{\epsilon}).
$$
Integrating on $(-L,L)$, we have 
$$ 
\frac{d}{dt} \int_{-L}^L \Psi(n_{\epsilon})\,dx + \int_{-L}^L n_{\epsilon} \partial_x n_{\epsilon}\cdot \partial_x p_{\epsilon}\psi'(n_{\epsilon})\,dx = \int_{-L}^L ({n_1}_{\epsilon}  G_1(p_{\epsilon} ) +{n_2}_{\epsilon}  G_2(p_{\epsilon} ))\psi(n_{\epsilon} )\,dx,
$$
where  $\Psi$ is an antiderivative of $\psi$. We choose $\psi(n)= \epsilon (\ln(n)-\ln(1-n)+\frac{1}{1-n})$ so that $n_{\epsilon} \psi'(n_{\epsilon})=P'(n_{\epsilon})$. Inserting the expression of $\psi$, we get
\begin{align*}
&\frac{d}{dt} \int_{-L}^L \epsilon n_{\epsilon} \ln\Big(\frac{n_{\epsilon}}{1-n_{\epsilon}}\Big)\,dx 
+ \int_{-L}^L |\partial_x p_{\epsilon}|^2 dx  \leq G_{m} \int_{-L}^L \epsilon n_{\epsilon}\left|\ln(n_{\epsilon})-\ln(1-n_{\epsilon})+\frac{1}{1-n_{\epsilon}}\right|\,dx.
\end{align*}
After integrating in time and using the expression of the pressure \eqref{eq:p2}, we have
\begin{align*}
\int_{-L}^L \epsilon n_{\epsilon} \ln\Big(\frac{p_{\epsilon}}{\epsilon}\Big)\,dx 
& -\int_{-L}^L \epsilon n_{\epsilon}^{\mbox{\scriptsize ini}} \ln\left(\frac{n_{\epsilon}^{\mbox{\scriptsize ini}}}{1-n_{\epsilon}^{\mbox{\scriptsize ini}}}\right)\,dx + \int_0^T \int_{-L}^L |\partial_x p_{\epsilon}|^2\,dxdt   \\
&\leq G_{m} \int_0^T \int_{-L}^L \left(\epsilon n_{\epsilon} \Big|\ln\Big(\frac{p_{\epsilon}}{\epsilon}\Big)\Big|+p_{\epsilon}\right) \,dx.
\end{align*}
Then, to prove that $\partial_x p_\epsilon \in L^2(Q_T)$, we are left to find a uniform bound on $ \int_{-L}^L \epsilon n_{\epsilon} |\ln(\frac{p_{\epsilon}}{\epsilon})|dx$. Using the expression of $p_\epsilon$ in \eqref{eq:p2}, we have
\begin{align*}
\int_{-L}^L \epsilon n_{\epsilon} |\ln\big(\frac{p_{\epsilon}}{\epsilon}\big)|\,dx & \leq \int_{-L}^L \epsilon n_{\epsilon} |\ln p_{\epsilon}| \,dx  + \epsilon \ln(\epsilon) \int_{-L}^Ln_{\epsilon}\,dx \\
 & \leq  \int_{-L}^L (1-n_{\epsilon})p_{\epsilon} |\ln p_{\epsilon}|\,dx + \epsilon \ln(\epsilon) \int_{-L}^L n_{\epsilon}\,dx
\end{align*}
Since $n_\epsilon$ is bounded in $L^1$, the second term of the right hand side is uniformly bounded with respect to $\epsilon$.
Moreover given that $0\leq p_\epsilon \leq P_M$ and $x\mapsto x|\ln x|$ is uniformly bounded on $[0,P_M]$, we get
$$  \int_{-L}^L (1-n_{\epsilon})p_{\epsilon} |\ln(p_{\epsilon})|\,dx \leq C \int_{-L}^L \mathbf{1}_{p_{\epsilon}>0} \,dx \leq 2LC.$$
This concludes the proof.

\end{proof}

  \subsection{Proof of theorem 1}
  
    \subsubsection{Convergence}
  
  In the last paragraph we have found a priori estimates for the densities and their space derivatives. To use a compactness argument, we need to obtain estimates on the time derivative. To do so, we are going to use the Aubin Lions theorem \cite{simon}.

According to Lemma \ref{lem:L2dp2}, ${n_1}_{\epsilon}\partial_x p_\epsilon$ and ${n_2}_{\epsilon}\partial_x p_\epsilon$ are in $L^2(Q_T)$. Moreover thanks to Lemma \ref{lem:estim}, we have that ${n_1}_{\epsilon} G_1(p_{\epsilon})$ and ${n_2}_{\epsilon} G_2(p_{\epsilon})$ are uniformly bounded in $L^\infty([0,T];L^1\cap L^\infty(-L,L))$, so $\partial_t {n_1}_{\epsilon}$ and $\partial_t {n_2}_{\epsilon}$ are uniformly bounded in $ L^2([0,T],W^{-1,2}(-L,L))$. We also have ${n_1}_{\epsilon}$ and ${n_2}_{\epsilon}$ bounded in $L^1([0,T],W^{1,1}(-L,L))$. Since we are working in one dimension, we have the following embeddings
$$
W^{1,1}(-L,L) \subset L^1(-L,L) \subset W^{-1,2}(-L,L).
$$
 The Aubin Lions theorem implies that $\{u \in L^1([0,T],W_{loc}^{1,1}(-L,L)); \dot{u} \in L^2([0,T],W^{-1,2}(-L,L))  \}$ is compactly embedded in $L^1([0,T],L^1(-L,L))$.
So we can extract strongly converging subsequences ${n_1}_{\epsilon}$ and  ${n_2}_{\epsilon}$ in $L^1(Q_T)$. The convergence of the pressure follows from the same kind of computation. 
 
 \subsubsection{Limit model}
 
From the above results, up to extraction of subsequences, $(n_{1\epsilon})_\epsilon$, $(n_{2\epsilon})_\epsilon$, and $(p_\epsilon)_\epsilon$ converge strongly in $L^1(Q_T)$ and a.e. towards some limits denoted $n_{10}$, $n_{20}$, and $p_0$, respectively.
Moreover, due to the uniform estimate on $(\partial_x p_\epsilon)_\epsilon$ in $L^2(Q_T)$ from Lemma \ref{lem:L2dp2}, we may extract a subsequence, still denoted $(\partial_x p_\epsilon)_\epsilon$, which converges weakly in $L^2(Q_T)$ towards $\partial_x p_0$.
Passing to the limit in the uniform estimates of Lemma \ref{lem:estim} gives \eqref{unif0} and ${n_1}_0, {n_2}_0, n_0, p_0$ belongs to $BV(Q_T)$.

Then, we recall that 
$$
\partial_t n_\epsilon - \partial_{xx} (p_\epsilon - \epsilon \ln(p_\epsilon + \epsilon)) = {n_1}_\epsilon G_1(p_\epsilon)+{n_2}_\epsilon G_2(p_\epsilon).
$$
From the uniform bounds on $p_\epsilon$, we get,
$$
\epsilon \ln \epsilon \leq \epsilon \ln(p_\epsilon + \epsilon) \leq \epsilon \ln(P_M + \epsilon).
$$
Thus, the term in the Laplacian converges strongly to $p_0$. Then, thanks to the strong convergence of $n_\epsilon$
and $p_\epsilon$, we deduce that in the sense of distributions
$$
\partial_t n_0 - \partial_{xx} p_0 = {n_1}_0 G_1(p_0)+{n_2}_0 G_2(p_0).
$$

Moreover, let $\phi\in W^{1,\alpha}(Q_T)$ with $\phi(T,x)=0$ ($\alpha>2$) be a test function.
We multiply equation \eqref{eq:n1} by $\phi$ and integrate using the Neumann boundary conditions, we get
\begin{align*}
-\int_0^T\int_{-L}^L {n_1}_\epsilon \partial_t \phi \,dtdx
  - \int_{-L}^L {n_1}_\epsilon^{ini}(x) \phi(0,x)\,dx + \int_0^T\int_{-L}^L {n_1}_\epsilon \partial_x p_\epsilon \partial_x \phi \,dxdt & \\
= \int_0^T\int_{-L}^L {n_1}_\epsilon G_1(p_\epsilon) \phi \,dxdt.&
\end{align*}
Due to the strong convergence of ${n_1}_\epsilon$ and $p_\epsilon$, we can pass easily to the limit $\epsilon\to 0$ into the first term of the left hand side and into the term in the right hand side.
For the second term, we use the assumptions on the initial data to pass into the limit.
For the third term, we can pass to the limit in a product of a weak-strong convergence from standard arguments, then we arrive at
\begin{align*}
-\int_0^T\int_{-L}^L {n_1}_0 \partial_t \phi \,dtdx
  - \int_{-L}^L n_1^{ini}(x) \phi(0,x)\,dx + \int_0^T\int_{-L}^L {n_1}_0 \partial_x p_0 \partial_x \phi \,dxdt & \\
= \int_0^T\int_{-L}^L {n_1}_0 G_1(p_0) \phi \,dxdt,&
\end{align*}
for any test function $\phi\in W^{1,\alpha}(Q_T)$.
Then we obtain the weak formulation of \eqref{limitn1} with Neumann boundary conditions on $p_0$.
We proceed by the same token to recover \eqref{limitn2}.

Passing into the limit in the relation $(1-n_\epsilon) p_\epsilon = \epsilon n_\epsilon$ implies
$$ (1-n_0) p_0 =0.$$
We can also pass to the limit for the segregation and deduce
${n_1}_0 {n_2}_0=0$.
To conclude the proof of Theorem \ref{TH1}, we are left to establish the relation \eqref{relcompbis}.

\subsection{Complementary relation}

In this section we want to pass to the limit in the equation for the pressure \eqref{eq:pp}.
However, this task can not be performed easily since we only have uniform estimates on the gradient of $n$ and $p$, whereas we need strong convergence of the gradient to pass to the limit in \eqref{eq:pp}.
Then we propose to work on the time antiderivative. Let us denote $q_\epsilon = p_\epsilon-\epsilon\ln(p_\epsilon+\epsilon)$. 
Then, we have proved above that $q_\epsilon \to p_0$ strongly as $\epsilon\to 0$, and
\begin{equation}\label{eq:nq}
\partial_t n_\epsilon - \partial_{xx} q_\epsilon = n_{1\epsilon}G_1(p_\epsilon) + n_{2\epsilon} G_2(p_\epsilon).
\end{equation}
Let us introduce $Q_\epsilon$ a time antiderivative of $q_\epsilon$, $Q_\epsilon(t,x):= \int_0^t q_\epsilon(s,x)\,ds$.
From the strong convergence of $q_\epsilon$, we deduce that $Q_\epsilon \to P_0:=\int_0^t p_0(s,x)\,ds$ as $\epsilon\to 0$.
By a simple time integration of \eqref{eq:nq}, we have
\begin{equation}\label{eq:nqint}
\partial_{xx} Q_\epsilon = n_\epsilon - n^{\mbox{\scriptsize ini}}_\epsilon - 
\int_0^t (n_{1\epsilon}G_1(p_\epsilon) + n_{2\epsilon} G_2(p_\epsilon))\,ds.
\end{equation}
From Lemma \ref{lem:estim}, we deduce that $\partial_{xx} Q_\epsilon$ is uniformly bounded in 
$L^1\cap L^\infty([0,T]\times(-L,L))$. Moreover, using the relation $q_\epsilon = p_\epsilon-\epsilon\ln(p_\epsilon+\epsilon)$, we get
$$
\partial_t\partial_x Q_\epsilon = \partial_x q_\epsilon = \frac{p_\epsilon}{p_\epsilon+\epsilon}\partial_x p_\epsilon.
$$
From the uniform bound on $\partial_x p_\epsilon$ in $L^2(Q_T)$ in Lemma \ref{lem:L2dp2}, 
we deduce that the sequence $(\partial_t\partial_x Q_\epsilon)_\epsilon$ is uniformly bounded
in $L^2([0,T]\times(-L,L))$.
Thus we have obtained that the sequence $(\partial_xQ_\epsilon)_\epsilon$ is uniformly bounded
in $H^1([0,T]\times(-L,L))$. We deduce from the compact embedding of $H^1(Q_T)$ into $L^2(Q_T)$
that we can extract a subsequence, still denoted $(\partial_xQ_\epsilon)_\epsilon$, converging
strongly in $L^2(Q_T)$ and weakly in $H^1(Q_T)$ towards a limit denoted $\overline{\partial Q}$. Since
$Q_\epsilon\to  P_0$ as $\epsilon\to 0$, we deduce $\overline{\partial Q}=\partial_x P_0$.

Thus, we can pass to the limit $\epsilon\to 0$ into the equation \eqref{eq:nqint}. We obtain
$$
n_0 - n^{\mbox{\scriptsize ini}}_0 - \partial_{xx} P_0 = \int_0^t (n_{10}G_1(p_0) + n_{20} G_2(p_0))\,ds.
$$
Multiplying by $p_0$ and using the relation $p_0n_0=p_0$, we deduce the complementary relation  \eqref{relcompbis}.
This concludes the proof of Theorem \ref{TH1}.

\subsection{Uniqueness of solutions}
\label{sec:uniq}

In this section, we focus on the uniqueness of solutions to the limiting problem \eqref{limitn0}--\eqref{segre}.
We first observe that from \eqref{limitn0} and \eqref{segre}, we have 
\begin{equation}\label{eqn0bis}
\pa_t n_0 -\pa_{xx} (n_0 p_0) = n_{10} G_1(p_0) + n_{20} G_2(p_0), \qquad \mbox{ in } \ \mathcal{D}'(Q_T).
\end{equation}
Since we have the segregation property given by \eqref{segre}, we deduce that the support of $n_{10}$ and of $n_{20}$ are disjoints. Then, by taking test functions with support included in the support of $n_{10}$ or of $n_{20}$ in the weak formulation of \eqref{eqn0bis}, we deduce that 
\begin{align}
\pa_t n_{10} -\pa_{xx} (n_{10} p_0) = n_{10} G_1(p_0), \qquad &\mbox{ in } \ \mathcal{D}'(Q_T), \label{limitn1bis} \\
\pa_t n_{20} -\pa_{xx} (n_{20} p_0) = n_{20} G_2(p_0), \qquad &\mbox{ in } \ \mathcal{D}'(Q_T). \label{limitn2bis}
\end{align}
We are going to prove that system \eqref{limitn1bis}--\eqref{limitn2bis} complemented with the segregation property \eqref{segre} and the relation \eqref{n0p02} admits an unique solution. More precisely our result reads:
\begin{prop}\label{TH2}
Let us assume that assumptions \eqref{hypG} on $G_i$, $i=1,2$ holds.
There exists a unique solution $(n_{10}, n_{20}, p_0)$ to the problem \eqref{limitn1bis}-\eqref{limitn2bis}-\eqref{n0p02}-\eqref{segre} with $0\leq n_{i0}\leq 1$ for $i=1,2$.
\end{prop}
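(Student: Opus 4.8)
The plan is to establish uniqueness via Hilbert's duality method, as announced in the introduction. The strategy is to assume the existence of two solutions $(n_{10}, n_{20}, p_0)$ and $(\tilde n_{10}, \tilde n_{20}, \tilde p_0)$ to the system \eqref{limitn1bis}--\eqref{limitn2bis}--\eqref{n0p02}--\eqref{segre}, form the differences, and show they vanish. The core difficulty in such free-boundary/Hele-Shaw uniqueness proofs is that the equations are degenerate and the nonlinearities $n_{i0}p_0$ are not directly controlled by an energy estimate, so instead of testing against the solutions themselves one tests against the solution of a suitable dual (backward) problem chosen to cancel the troublesome terms.

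\medskip

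\textbf{First} I would subtract the equations for the two solutions. Writing the key nonlinear fluxes via the algebraic identities implied by the constraints, I would exploit that $p_0 n_0 = p_0$ together with the segregation $n_{10}n_{20}=0$ to rewrite each product $n_{i0}p_0$ in a monotone form. A standard device here is to introduce, for $i=1,2$, a weight so that the difference $n_{i0}-\tilde n_{i0}$ and the pressure difference $p_0 - \tilde p_0$ appear with a favourable sign: on the support of species $i$ one has $n_{i0}=1$ where $p_0>0$, and elsewhere $p_0=0$, which lets one express $\partial_{xx}(n_{i0}p_0 - \tilde n_{i0}\tilde p_0)$ in terms of the differences in a way amenable to duality. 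I would then introduce smooth test functions $\psi_i$ solving the adjoint problem
\begin{equation*}
\partial_t \psi_i + A_{i,\epsilon}\,\partial_{xx}\psi_i + B_{i,\epsilon}\,\psi_i = \chi_i, \qquad \psi_i(T,\cdot)=0,
\end{equation*}
where $A_{i,\epsilon}$ is a regularised version of the (nonnegative) coefficient arising from the divided difference of the flux $n_{i0}p_0 - \tilde n_{i0}\tilde p_0$, and $B_{i,\epsilon}$ encodes the Lipschitz reaction difference $G_i(p_0)-G_i(\tilde p_0)$, controlled by the bound $|G_i'|\le C$ from \eqref{hypG}. Here $\chi_i$ is an arbitrary smooth target, chosen at the end to extract the sign of $n_{i0}-\tilde n_{i0}$.

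\medskip

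\textbf{Then} the plan is to multiply the difference equation by $\psi_i$, integrate over $Q_T$, and integrate by parts using the Neumann conditions, so that the combination of the dual PDE and the difference equation telescopes to an expression of the form $\iint (n_{i0}-\tilde n_{i0})\,\chi_i$. The regularising parameter $\epsilon$ in the dual problem is needed because the coefficient $A_{i,\epsilon}$ degenerates wherever the pressures vanish; one must establish uniform $L^2$ (or $H^1$) a priori bounds on $\psi_i$, $\sqrt{A_{i,\epsilon}}\,\partial_x\psi_i$, and the time derivative so as to pass to the limit and bound the error terms produced by the regularisation. Summing the contributions of both species and using the segregation constraint to decouple their supports, one concludes $\iint (n_{i0}-\tilde n_{i0})\chi_i \le 0$ for every admissible $\chi_i$, whence $n_{10}=\tilde n_{10}$ and $n_{20}=\tilde n_{20}$ a.e.; uniqueness of $p_0$ then follows from the constraint \eqref{n0p02} together with the elliptic relation $-\partial_{xx}p_0 = n_{10}G_1(p_0)+n_{20}G_2(p_0)$ on $\{p_0>0\}$.

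\medskip

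\textbf{The hard part} will be the construction and uniform estimation of the dual solutions $\psi_i$ in the presence of the degenerate, only-$BV$ coefficients $A_{i,\epsilon}$: because $n_{i0}p_0$ is merely of bounded variation and the diffusion vanishes on the complement of the positivity set, one cannot invoke classical parabolic regularity, and the whole argument hinges on choosing the regularisation of $A_{i,\epsilon}$ so that the error terms $\iint (\text{remainder})\,\partial_{xx}\psi_i$ are controlled by $\sqrt{\epsilon}$-type bounds that vanish in the limit. Managing the interaction across the moving interface $\zeta$, where the two reaction terms $G_1$ and $G_2$ differ, while keeping the test functions supported within a single species' region, is the delicate bookkeeping step that makes the two-population case genuinely harder than the single-population argument of \cite{HV}.
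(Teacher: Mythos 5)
Your proposal follows essentially the same route as the paper's proof: Hilbert's duality method applied to the differences of two solutions, with divided-difference coefficients whose key bounds ($0\le A_i,B_i\le 1$, $0\le C_i\le\gamma$ in the paper's Lemma \ref{lemABC}) come exactly from the monotone structure you identify (segregation plus $(1-n_0)p_0=0$ forcing $n_{i0}=1$ where $p_0>0$), followed by a degenerate backward dual problem, its regularization, uniform estimates (maximum principle in $L^\infty$, $L^2$ bounds on $\partial_x\psi_i$ and on the weighted second derivative) and passage to the limit to conclude $n_{i0}=\widetilde{n_{i0}}$. The only deviation is your final step for the pressure: the pointwise relation $-\partial_{xx}p_0 = n_{10}G_1(p_0)+n_{20}G_2(p_0)$ on $\{p_0>0\}$ is not among the equations \eqref{limitn1bis}-\eqref{limitn2bis}-\eqref{n0p02}-\eqref{segre} defining the problem (Theorem \ref{TH1} only yields its time-integrated version \eqref{relcompbis}), so uniqueness of $p_0$ should instead be obtained as in the paper from the weak formulation \eqref{eq:uniq1}: once the densities coincide, testing with $\psi_i=q_i-\widetilde{q_i}$ (where $q_i=n_{i0}p_0$) and using the monotonicity of $G_i$ gives $q_i=\widetilde{q_i}$, hence $p_0=\widetilde{p_0}$.
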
 

\begin{proof}
We follow the idea developped in \cite{PQV} and adapt the Hilbert's duality method.
Consider two solutions $(n_{10}, n_{20}, p_0)$ and $(\widetilde{n_{10}}, \widetilde{n_{20}}, \widetilde{p_0})$ of the system \eqref{limitn1bis}-\eqref{limitn2bis}-\eqref{n0p02}-\eqref{segre}. 
Making the difference and denoting $q_i = n_{i0} p_0$ and $\widetilde{q_i}=\widetilde{n_{i0}} \widetilde{p_0}$, for $i=1,2$, we have
\begin{align*}
\pa_t (n_{10}-\widetilde{n_{10}}) -\pa_{xx} (q_1 - \widetilde{q_1}) = n_{10} G_1(p_0) - \widetilde{n_{10}} G_1(\widetilde{p_0}), \qquad &\mbox{ in } \ \mathcal{D}'(Q_T), \\
\pa_t (n_{20}-\widetilde{n_{20}}) -\pa_{xx} (q_2 - \widetilde{q_2}) = n_{20} G_2(p_0) - \widetilde{n_{20}} G_2(\widetilde{p_0}), \qquad &\mbox{ in } \ \mathcal{D}'(Q_T).
\end{align*}
We first observe that on the set $\{n_{10}>0\}\cap\{p_0>0\}$, we have $q_1 = p_0$ from \eqref{n0p02}. Hence we have $n_{10} G_1(p_0) = n_{10} G_1(q_1)$. The same observation holds for the other terms in the right hand side of these latter equations.
For any suitable test functions $\psi_1$ and $\psi_2$, we have, for $i=1,2$,
\begin{equation}\label{eq:uniq1}
\iint_{Q_T} \Big[(n_{i0}-\widetilde{n_{i0}})\pa_t \psi_i + (q_i - \widetilde{q_i}) \pa_{xx} \psi_i + (n_{i0} G_i(q_i) - \widetilde{n_{i0}} G_i(\widetilde{q_i}))\psi_i\Big] \,dxdt = 0.
\end{equation}
This can be rewritten as, for $i=1,2$,
\begin{equation}\label{eq:uniq2}
\iint_{Q_T} (n_{i0}-\widetilde{n_{i0}}+q_i - \widetilde{q_i})\Big(A_i\pa_t \psi_i + B_i \pa_{xx} \psi_i + A_i G_i(q_i) \psi_i - C_i B_i\psi_i\Big) \,dxdt = 0,
\end{equation}
where
$$
A_i = \frac{n_{i0}-\widetilde{n_{i0}}}{n_{i0}-\widetilde{n_{i0}}+q_i - \widetilde{q_i}}, \qquad
B_i = \frac{q_i - \widetilde{q_i}}{n_{i0}-\widetilde{n_{i0}}+q_i - \widetilde{q_i}}, \qquad
C_i = - \widetilde{n_{i0}} \frac{G_i(q_i)-G_i(\widetilde{q_i})}{q_i-\widetilde{q_i}},
$$
and we define $A_i=0$ as soon as $n_{i0}=\widetilde{n_{i0}}$ and 
$B_i=0$ as soon as $q_i = \widetilde{q_i}$, whatever is the value of their denominators. It is shown in Lemma \ref{lemABC} below that, for $i=1,2$, we have 
$0\leq A_i\leq 1$, $0\leq B_i \leq 1$, $0\leq C_i \leq \gamma$.

The idea of the Hilbert's duality method consists in solving the \textit{dual problem}, which is defined here by, for any smooth function $\Phi_i$, $i=1,2$,
\begin{equation}\label{dualpb}
\left\{\begin{array}{l}
A_i\pa_t \psi_i + B_i \pa_{xx} \psi_i + A_i G_i(q_i) \psi_i - C_i B_i\psi_i = A_i \Phi_i, \quad \mbox{ in } \ Q_T,  \\[1mm]
\partial \psi_i(\pm L) = 0 \quad \mbox{ in } \  (0,T), \qquad
\psi_i(\cdot,T) = 0 \quad \mbox{ in } (-L,L).
\end{array}\right.
\end{equation}
If such a system admits a smooth solution, then, by choosing $\psi_i$ as a test function in \eqref{eq:uniq2}, we get
$$
\iint_{Q_T} (n_{i0}-\widetilde{n_{i0}}+q_i - \widetilde{q_i}) A_i \Phi_i \,dxdt = 0.
$$
From the expression of $A_i$, we deduce
$$
\iint_{Q_T} (n_{i0}-\widetilde{n_{i0}}) \Phi_i \,dxdt = 0,
$$
for any smooth function $\Phi_i$, $i=1,2$. 
It is obvious to deduce the uniqueness for the density. Uniqueness for the pressure will follow from \eqref{eq:uniq1}.

However, the dual problem \eqref{dualpb} is not uniformly parabolic and its coefficients are not smooth. Then, in order to make this step rigorous, a regularization procedure is required. It can be done exactly as in \cite[p 109-110]{PQV}. For the sake of completeness of this paper, this regularizing procedure is recalled in Appendix \ref{appen}.
\end{proof}

\begin{lem}\label{lemABC}
Under assumptions \eqref{hypG}, 
we have $0\leq A_i\leq 1$, $0\leq B_i \leq 1$, $0\leq C_i \leq \gamma$, for $i=1,2$.
\end{lem}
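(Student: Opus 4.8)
The plan is to establish the three inequalities by reading off the definitions of $A_i$, $B_i$, and $C_i$ and using the sign and monotonicity assumptions in \eqref{hypG} together with the structural relation $q_i = n_{i0}p_0$. The key observation driving everything is that the map $n \mapsto np_0$ that produces $q_i$ from $n_{i0}$ is monotone in a suitable sense, which forces $n_{i0}-\widetilde{n_{i0}}$ and $q_i-\widetilde{q_i}$ to carry the same sign. Once that sign-agreement is in hand, $A_i$ and $B_i$ are each a ratio of one of these quantities to their sum, so both lie in $[0,1]$ automatically.

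First I would treat $A_i$ and $B_i$ together. Writing $a = n_{i0}-\widetilde{n_{i0}}$ and $b = q_i-\widetilde{q_i}$, we have $A_i = a/(a+b)$ and $B_i = b/(a+b)$ with the convention $A_i = 0$ when $a = 0$ and $B_i = 0$ when $b = 0$. The central claim is that $a$ and $b$ have the same sign (including the degenerate cases where one vanishes). To see this I would use $q_i = n_{i0}p_0$ together with the complementarity relation \eqref{n0p02}, which gives $(1-n_0)p_0 = 0$ and hence $p_0 = 1$ wherever $p_0>0$ on the support of $n_{i0}$; combined with the segregation \eqref{segre}, on the relevant set one has $q_i = p_0$ and $n_{i0}=1$, so comparing the two solutions shows that an increase in density corresponds to an increase in $q_i$. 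Granting $ab \geq 0$, both $a$ and $b$ share the sign of $a+b$, so $A_i,B_i \geq 0$, and since $A_i + B_i = 1$ each is also $\leq 1$. The degenerate conventions are exactly what is needed to keep these ratios in $[0,1]$ when $a+b = 0$.

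For $C_i$, the expression is $C_i = -\widetilde{n_{i0}}\,\dfrac{G_i(q_i)-G_i(\widetilde{q_i})}{q_i-\widetilde{q_i}}$, a difference quotient of $G_i$ scaled by $\widetilde{n_{i0}}$. Since $0 \leq \widetilde{n_{i0}} \leq 1$ by hypothesis, it suffices to bound the difference quotient. Because $G_i' < 0$ on $[0,P_M^i]$ by \eqref{hypG}, the difference quotient is negative, so the leading minus sign makes $C_i \geq 0$. For the upper bound I would invoke the third line of \eqref{hypG}, which gives $\inf |G_i'| = \gamma$ over the relevant interval, equivalently $|G_i'| \geq \gamma$; by the mean value theorem the difference quotient has modulus at least $\gamma$, but what we need is the opposite bound. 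Here I expect the main subtlety: the upper bound $C_i \leq \gamma$ must come from $|G_i'| \leq \gamma$, i.e.\ $\gamma$ is really the relevant \emph{upper} Lipschitz-type bound on $G_i$ over the range of pressures encountered, so I would read $\gamma$ as controlling $\sup|G_i'|$ on the effective range and apply the mean value theorem to the difference quotient, using $0\leq\widetilde{n_{i0}}\leq 1$ to absorb the prefactor. The main obstacle is thus reconciling the definition of $\gamma$ in \eqref{hypG} (an infimum of $|G_i'|$) with the upper bound $C_i\le\gamma$; resolving this requires that the difference quotient be evaluated over a range where $|G_i'|$ is pinned to $\gamma$, or that $\gamma$ simultaneously serves as the Lipschitz constant, and I would make that identification explicit before concluding.
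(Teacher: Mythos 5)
Your overall architecture matches the paper's: sign agreement between $a=n_{i0}-\widetilde{n_{i0}}$ and $b=q_i-\widetilde{q_i}$ gives $0\leq A_i,B_i\leq 1$, and monotonicity of $G_i$ gives $C_i\geq 0$. But the sign-agreement claim \emph{is} the content of the lemma, and your justification of it does not go through as written. The heuristic that ``$n\mapsto np_0$ is monotone'' is not meaningful here, because the two solutions carry different pressures $p_0$ and $\widetilde{p_0}$; and your ``relevant set'' discussion (where $p_0>0$ and $n_{i0}>0$, hence $n_0=1$ and $q_i=p_0$ --- note also your slip ``$p_0=1$'', which should read $n_0=1$) inspects only one of the two solutions. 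The step that actually must be proved concerns the \emph{other} solution: if $n_{i0}>\widetilde{n_{i0}}$, then necessarily $\widetilde{q_i}=0$. The paper gets this by a short case analysis: either $\widetilde{n_{i0}}=0$, so $\widetilde{q_i}=0\leq q_i$; or $0<\widetilde{n_{i0}}<1$ (possible since $\widetilde{n_{i0}}<n_{i0}\leq 1$), in which case segregation \eqref{segre} gives $\widetilde{n_0}=\widetilde{n_{i0}}<1$, and then $(1-\widetilde{n_0})\widetilde{p_0}=0$ forces $\widetilde{p_0}=0$, so again $\widetilde{q_i}=0\leq q_i$. The converse implication, needed for $B_i$, is proved similarly: $q_i>\widetilde{q_i}\geq 0$ gives $n_{i0}>0$ and $p_0>0$, hence by segregation and \eqref{n0p02} one has $n_{i0}=n_0=1\geq\widetilde{n_{i0}}$. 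Without these two implications your $ab\geq 0$ is an assertion rather than a proof; with them, your conclusion for $A_i$ and $B_i$ (via $A_i+B_i=1$ and the conventions at vanishing numerators) is fine.

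On $C_i$, your diagnosis is correct and points at a genuine defect of the paper rather than of your argument. Nonnegativity follows from $G_i'<0$, but the upper bound $C_i\leq\gamma$ cannot follow from \eqref{hypG} as written: there $\gamma$ is an \emph{infimum} of $|G_i'|$, which by the mean value theorem bounds the difference quotient from below, giving $C_i\geq\widetilde{n_{i0}}\,\gamma$ --- the wrong direction, unless $|G_i'|$ is identically $\gamma$. The paper's own one-line proof silently replaces $\gamma$ by a Lipschitz constant (``$G_i$ is nonincreasing and Lipschitz''), i.e.\ the bound should read $C_i\leq\sup_{[0,P_M^i]}|G_i'|$, a quantity that \eqref{hypG} never names. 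Nothing downstream is damaged by this: the regularized dual problem in the appendix only needs $0\leq C_i\leq M$ for some finite $M$, so any Lipschitz bound on $G_i$ suffices. You were right not to force the conclusion $C_i\leq\gamma$; the correct fix is to state the bound with the Lipschitz constant (or to strengthen \eqref{hypG} so that $|G_i'|$ is bounded above), not to look for a regime where $|G_i'|$ is pinned to $\gamma$.
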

\begin{proof}
We observe that, for $i=1,2$, $n_{i0}>\widetilde{n_{i0}}$ implies $q_i \geq \widetilde{q_i}$. Indeed, either $\widetilde{n_{i0}}=0$ and then $\widetilde{q_{i}}=0\leq q_i$, or $0<\widetilde{n_{i0}}<1$ and then from the segregation property \eqref{segre} we have $\widetilde{n_0}=\widetilde{n_{i0}}$ and from the relation $(1-\widetilde{n_0})\widetilde{p_0}=0$ we deduce that $\widetilde{p_0}=0$, thus $\widetilde{q_{i}}=0\leq q_i$.
Similarly, for $i=1,2$, $\widetilde{n_{i0}}>n_{i0}$ implies $\widetilde{q_i} \geq q_i$. By setting $A_i=0$ whenever $\widetilde{n_{i0}} = n_{i0}$, we conclude that $0\leq A_i \leq 1$.

By the same token, we show that, for $i=1,2$, $q_i \geq \widetilde{q_i}$ implies $n_{i0} \geq \widetilde{n_{i0}}$. Indeed, from $q_i = n_{i0} p_0 >0$, we deduce that $n_{i0}>0$ which implies $n_0=n_{i0}$, and then $p_0>0$ implies from \eqref{n0p02} that $n_{i0}=1\geq \widetilde{n_{i0}}$. Hence, $0\leq B_i\leq 1$.

Finally, the bound on $C_i$ is a direct consequence of the fact that $G_i$ is nonincreasing and Lipschitz (see \eqref{hypG}) and that $0\leq \widetilde{n_{i0}} \leq 1$.
\end{proof}

\section{Numerical simulations}
\label{sec:numeric}

\subsection{Numerical scheme}

The numerical simulations are performed using a finite volume method similar as the one proposed in \cite{CCH,CDHV}. The scheme used for the conservative part is a classical explicit upwind scheme. To facilitate the reading of this paper, we recall here the scheme used. We divide the computational domain into finite-volume cells $C_j =[x_{j-1/2} ,x_{j+1/2} ] $ of uniform size $\Delta x$ with $x_j =j\Delta x$, $j\in \{1,...,M_x\} $,  and $x_{j}= \frac{x_{j-1/2} +x_{j+1/2} }{2}$ so that
$$ 
-L= x_{1/2}  < x_{3/2} < ... <  x_{j-1/2} < x_{j+1/2} < ... < x_{M_x-1/2}  < x_{M_x+1/2} = L,
$$
and define the cell average of functions $n_1(t,x)$ and $n_2(t,x)$ on the cell $C_j$ by
$$ 
\bar{n}_{\beta_j}(t)= \frac{1}{\Delta x} \int_{C_j}n_\beta(t,x)\,dx,\quad  \beta \in \{1,2\}.
$$
The scheme is obtained by integrating system \eqref{eq:n1}-\eqref{eq:n2} over $C_j$ and is given by
\begin{equation} \label{eq:scheme}
{\bar{n}}_{\beta_j}^{k+1}= - \frac{F^k_{\beta,j+1/2}-F^k_{\beta,j-1/2}}{\Delta x}+{\bar{n}}_{\beta_j}^{k+1} G_\beta(p_{j}^{k}) \quad \mbox{for } \beta =1,2 ,
\end{equation}
where $F^k_{\beta,j+1/2}$ are numerical fluxes approximating 
$-n^k_{\beta} u^k_{\beta}:=-n^k_{\beta} \partial_x(p^k_{\beta})$ and defined by:
	\begin{equation*}
	 F^k_{\beta,j+1/2} =(u^k_{{\beta}_{j+1/2}})^+\bar{n}^k_{\beta_j} +(u^k_{{\beta}_{j+1/2}})^- {\bar{n}}^k_{\beta_{j+1}},\quad \beta \in \{1,2\},
	 \end{equation*}
where
\begin{equation*}
	 {u_\beta}^k_{j+1/2} = \left\{
      \begin{aligned}
        &- \frac{p^k_{{j+1}}-p^k_{j}}{\Delta x}, & \quad \forall j \in \{2,...,M_x-1\},\\
        &0, & \quad \mbox{ otherwise },
        \end{aligned}
    \right.
	\end{equation*}
with the discretized pressure 
$$
p^k_{j} = \frac{\epsilon n^k_j}{1-n^k_j}, \quad 
n_j^k = \bar{n}^k_{1_j}+\bar{n}^k_{2_j}.
$$
We use the usual notation $(u)^+=\max(u,0)$ and $(u)^-=\min(u,0)$ for the positive part and, respectively, the negative part of $u$.
Neumann boundary conditions are also implemented at the boundaries of the computational model.

In order to illustrate the time dynamics for the model, we plot in Fig \ref{fig:1} the densities computed thanks to the above scheme for $\epsilon = 1$ at different times~: (a) $t=0$, (b) $t=0.1$,  (c) $t=0.3$,  (d) $t=0.6$, (e) $t=1$ and (f) $t=2$.
For this numerical simulation, the densities are initialized by
\begin{equation}\label{inifig}
 n_1^{\mbox{\scriptsize ini}}(x)=0.98 ~\mathbf{1}_{[-L;0.25]} (x) \quad \mbox{ and } \quad n_2^{\mbox{\scriptsize ini}}(x)= 0.98~ \mathbf{1}_{[0.25;L]} (x), 
\end{equation}
with $L=5$,
and the growth rates are defined by
\begin{equation}\label{GRfig}
 G_1(p)=10(1-p/2)  \quad \mbox{ and } \quad G_2(p)=10(1-p).
\end{equation}
We recall that we have defined the parameters $P_M^1$ and $P_M^2$ as the values of the pressure for which the growth functions vanish (see \eqref{hypG}). In this case their numerical values are given by $P_M^1=2$ and $P_M^2=1$.
Then, we define
\begin{equation}\label{P_M}
{N_M^1}_{\epsilon}=p^{-1}(P_M^1)=\frac{P_M^1}{\epsilon+P_M^1} \quad \mbox{ and } \quad {N_M^2}_{\epsilon}=p^{-1}(P_M^2)=\frac{P_M^2}{\epsilon+P_M^2}.
\end{equation}
Since the growth functions are different, clearly ${N_M^2}_\epsilon<{N_M^1}_\epsilon$.
\begin{figure}[!ht]
   \centering
\includegraphics[width=0.8\linewidth]{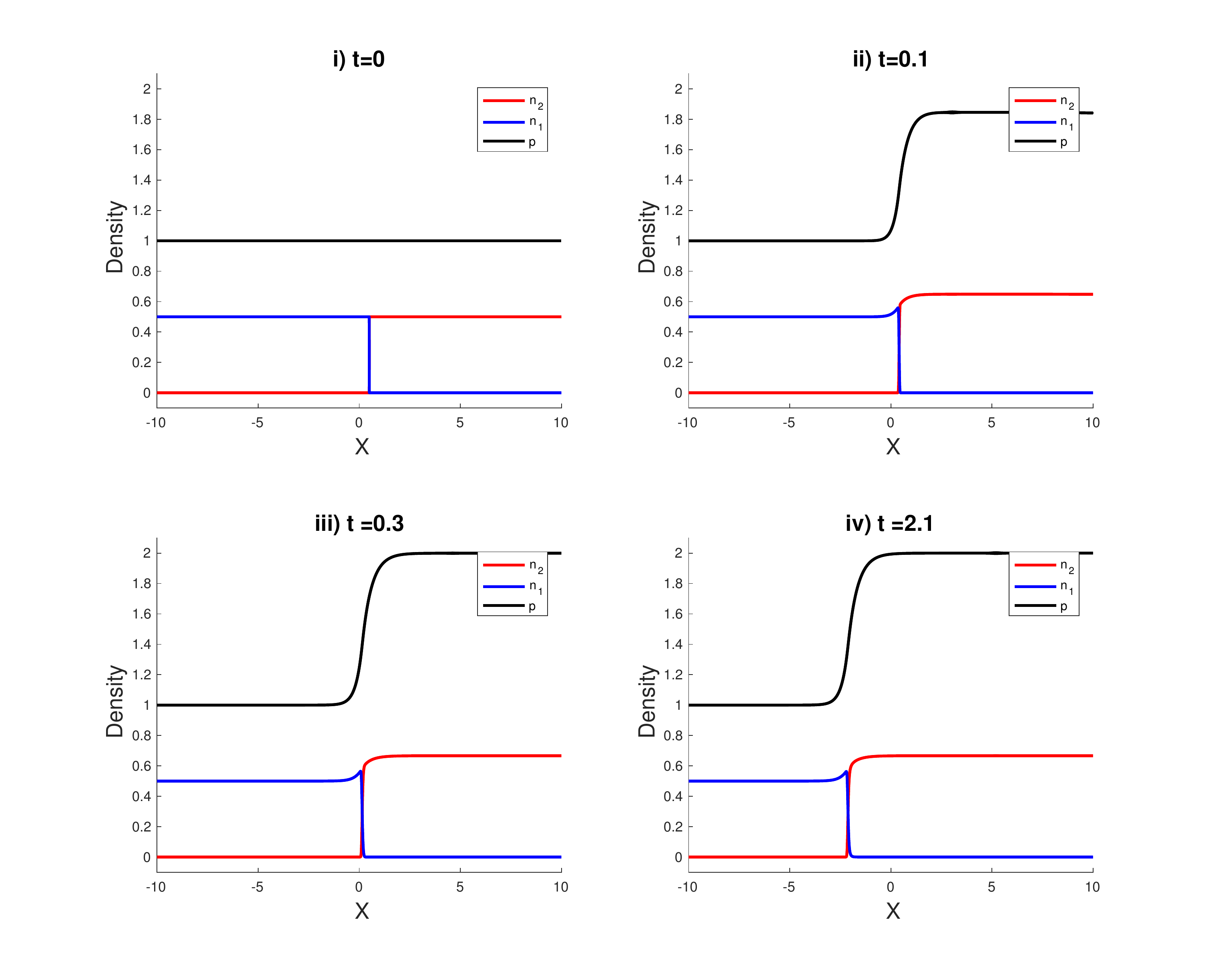}
\caption{Densities $n_1$ (red), $n_2$ (blue) and pressure $p$ as functions of position $x$ at different times: a) $t=0$, (b) $t=0.1$,  (c) $t=0.3$,  (d) $t=0.6$, (e) $t=1$ and (f) $t=2$; in the case $\epsilon=1$ with the initial densities and growth rate defined by \eqref{inifig}-\eqref{GRfig}.}
\label{fig:1}
\end{figure}

In Fig \ref{fig:1} the red and blue species are initially segregated and equal to 0.5. At first the dynamics is driven by the growth term, so the two species grow and reach their respective maximal packing values ${N_M^1}_{\epsilon}$ and ${N_M^2}_{\epsilon}$.
Once this value is reached ($t=1, 2$ on both panel (ii), (iii) and (iv)), we observe two phenomena. First a bump is created on the left side of the interface, in the domain of $n_2$. This bump help the total densities to stay continuous, as it joins the two maximal densities. It also means that, at the interface, the pressure is going to be higher than the limit pressure $P_M^2$. Then the derivative of the pressure at the interface is positive, which induces a motion of the interface representing the fact that the red species $n_1$ pushes the blue species $n_2$. This motion of the interface is the second phenomenon which is observed.

\subsection{Influence of the parameter \texorpdfstring{$\epsilon$}{e}}
\label{sec:numeps}

In order to illustrate our main result on the limit $\epsilon\to 0$, we show, in this section, some numerical simulations of the model \eqref{eq:n1}-\eqref{eq:n2} when $\epsilon$ goes to $0$. 
We also compare with the analytical solution of the limiting Hele-Shaw free boundary model. To perform these simulations we use the numerical scheme \eqref{eq:scheme} complemented with the initial condition \eqref{inifig} and the growth function \eqref{GRfig}. For the limiting model, we use the initial conditions 
\begin{equation*}
 n_1^{\mbox{\scriptsize ini}}(x)= \mathbf{1}_{[-L;0.25]} (x) \quad \mbox{ and } \quad n_2^{\mbox{\scriptsize ini}}(x)= \mathbf{1}_{[0.25;L]} (x), 
\end{equation*}
and the growth function \eqref{GRfig}. The analytical expressions of the solution to the limiting Hele-Shaw system is computed in \cite{CDHV}.

Fig \ref{fig:2} displays the time dynamics of the densities for different values of $\epsilon$:  (a) $\epsilon = 1$, (b) $\epsilon = 0.1$, (c) $\epsilon = 0.01$, and (d) $\epsilon = 0.001$, along with solution to the Hele-Shaw system (e). For all simulations, the densities are plotted at times $t=0.5$, $t=1$ and $t=1.5$.

  \begin{figure}
   \centering
   \begin{subfigure}[ $\epsilon=1$]{
\includegraphics[width=0.7\linewidth]{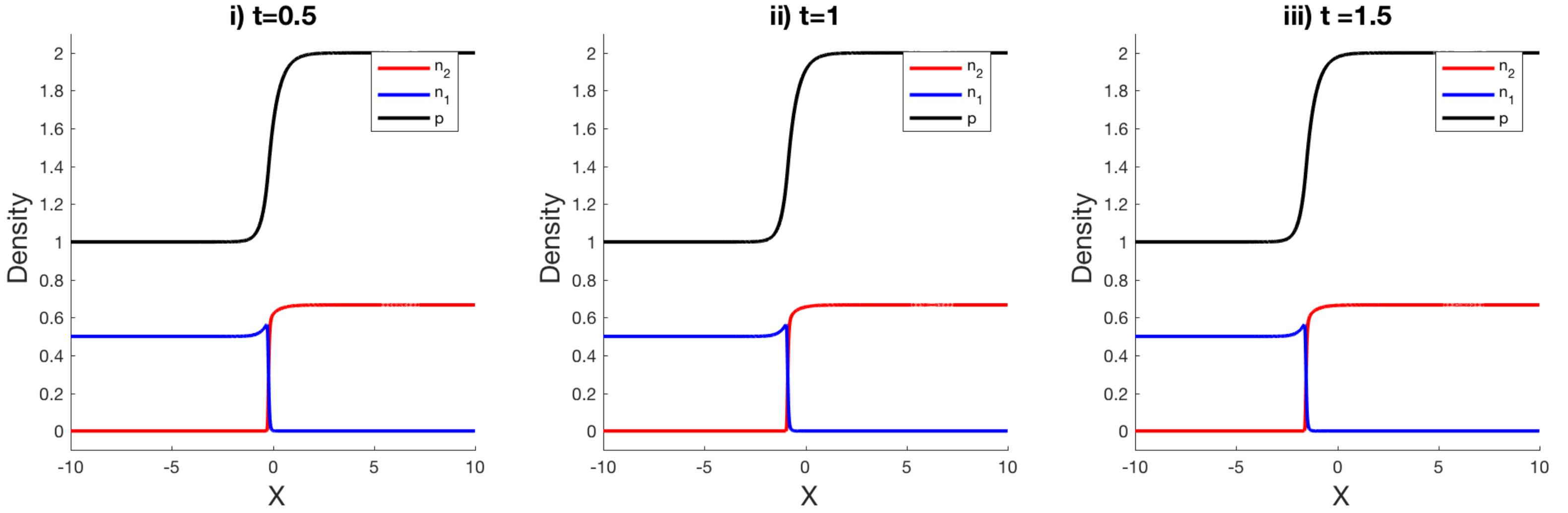} 
}
\end{subfigure}
\hfill
 \begin{subfigure}[$\epsilon=0.1$]{
\includegraphics[width=0.7\linewidth]{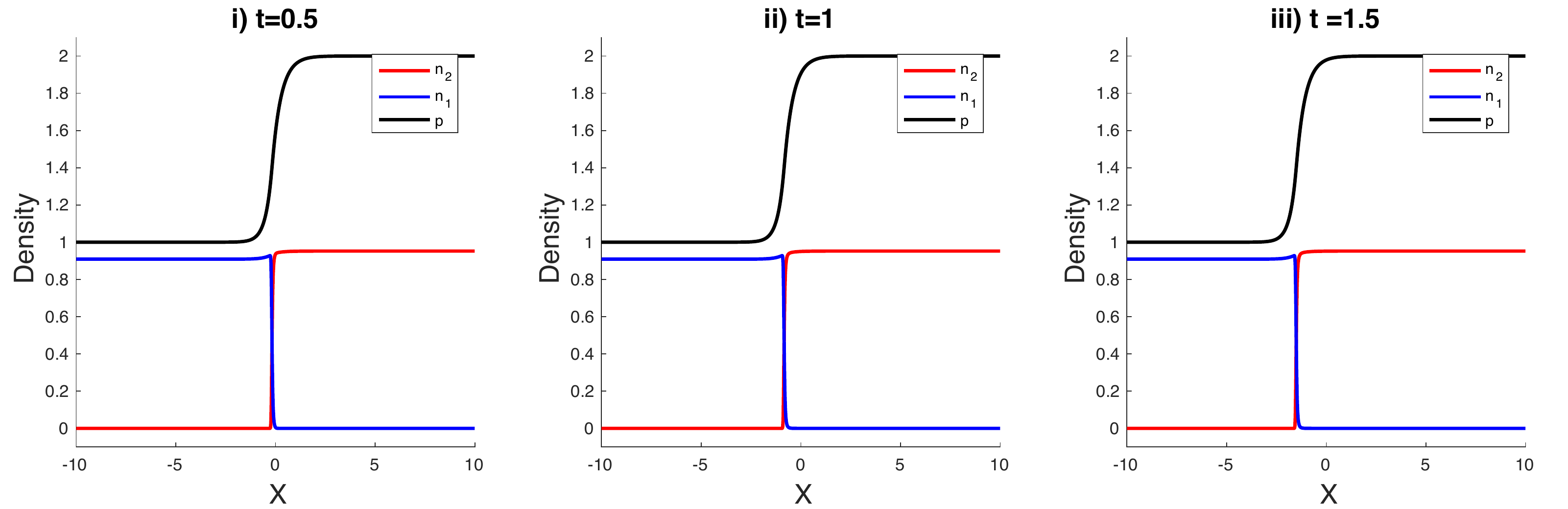}
}
\end{subfigure}
\hfill
   \begin{subfigure}[$\epsilon=0.01$]{
\includegraphics[width=0.7\linewidth]{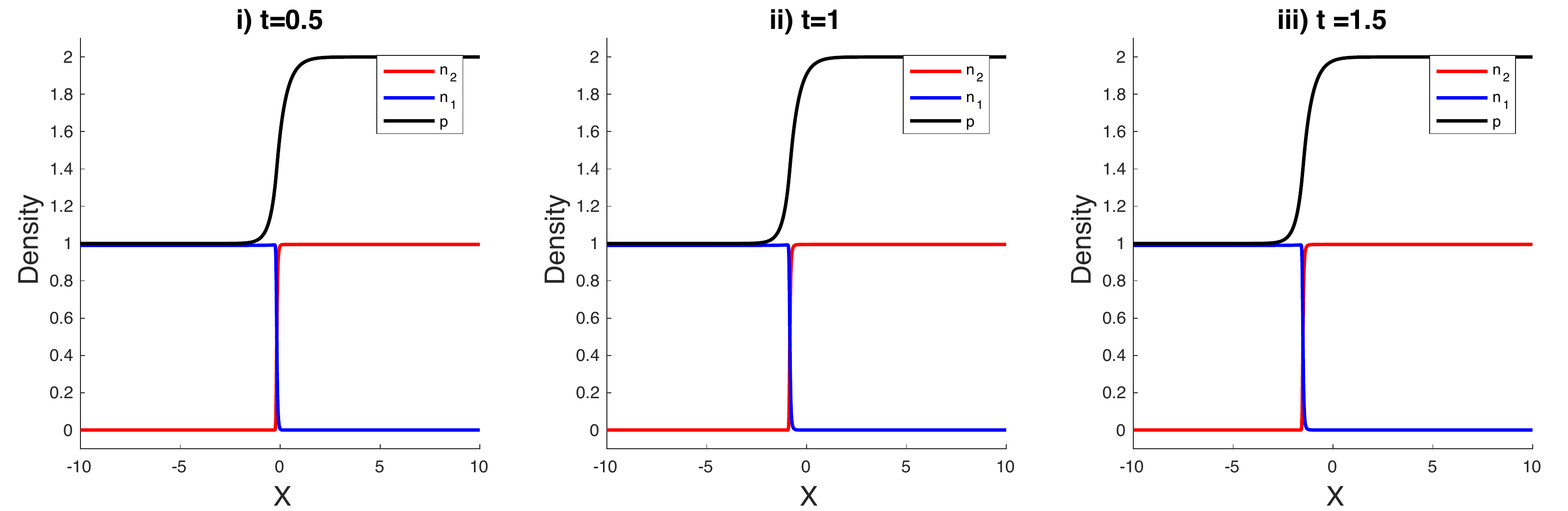}
}
\end{subfigure}
\hfill
 \begin{subfigure}[$\epsilon=0.001$]{
\includegraphics[width=0.7\linewidth]{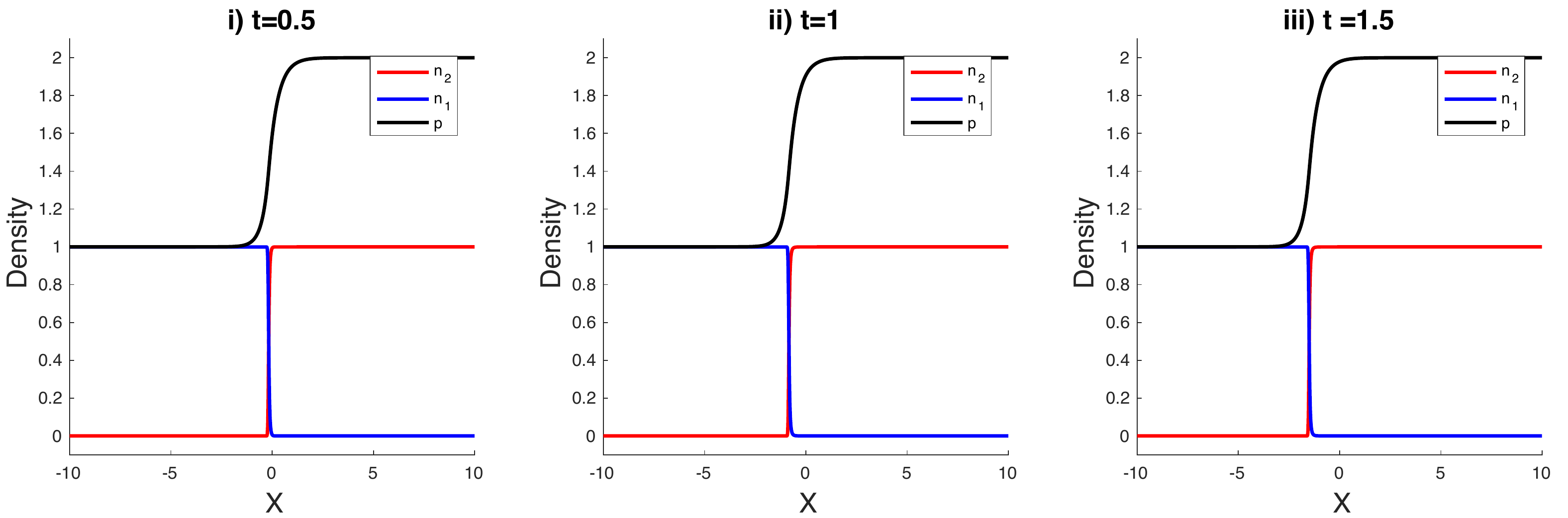}
}
\end{subfigure}
\hfill
  \begin{subfigure}[Hele-Shaw system]{
\includegraphics[width=0.7\linewidth]{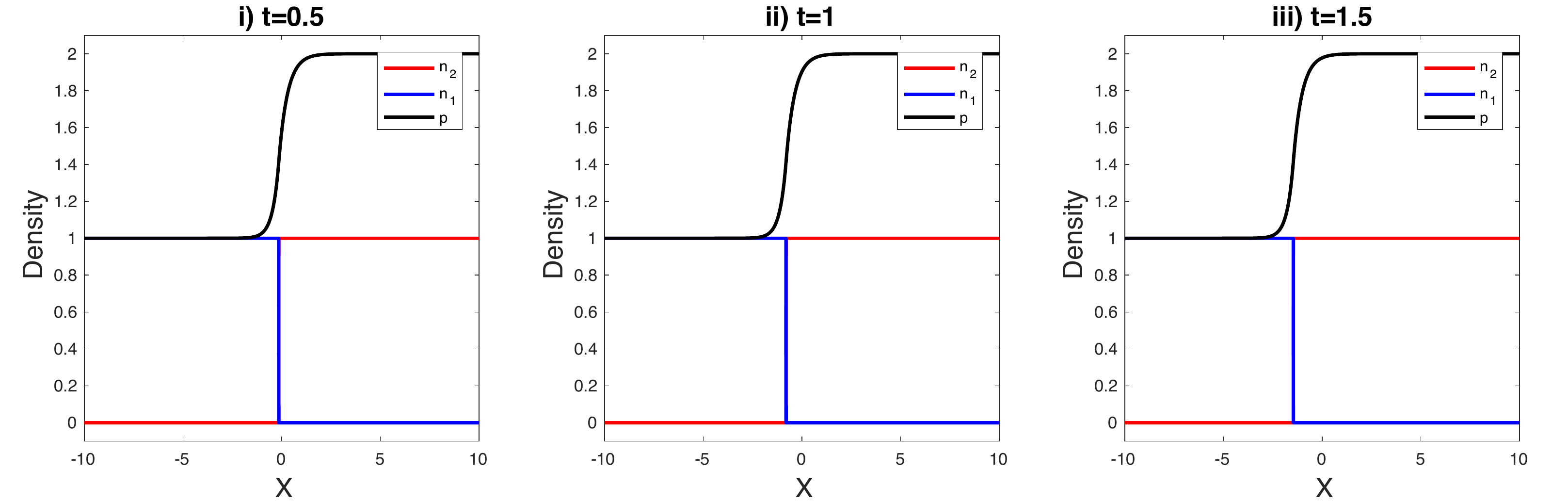}
}
\end{subfigure}
\caption{Densities $n_1$ (red), $n_2$ (blue) as functions of position $x$ at different times:  (i) $t=0.5$,   (ii) $t=1$,  (iii) $t=1.5$; and for different values of $\epsilon$: (a) $\epsilon=1$, (b) $\epsilon=0.1$, (c) $\epsilon=0.01$, (d) $\epsilon=0.001$, (e) Hele-Shaw system.}
\label{fig:2}
\end{figure}

We observe in Fig.~\ref{fig:2} that the time dynamics of the numerical solutions is similar for each case and follows the dynamics presented above for the case $\epsilon=1$. The main difference observed is the maximal packing value ${N_M^1}_{\epsilon}$ and ${N_M^2}_{\epsilon}$. Indeed since the maximal packing values are given by \eqref{P_M}, when $\epsilon \rightarrow 0$, the maximal packing value converges to $1$. This is consistent with the numerical results shown in Fig.~\ref{fig:2}. In addition we observe that as $\epsilon$ decreases the stiffness of the densities increases. In overall we observe that as $\epsilon \rightarrow 0$ densities converge to Heaviside functions.

\subsection{Particular solutions: tumor spheroid}

One interested application of this study is tissue development. Since we consider a system with two populations of cells, we can for example consider the case of tumour with proliferative cells, whose density is denoted $n_2$, and quiescent cells, whose density is denoted $n_1$.

\paragraph{Solution of the limiting Hele-Shaw problem.} We assume that initially the tumor is a spheroid centered in 0
and is composed by a spherical core representing the quiescent cells surrounded by a ring representing the proliferative cells. Then, we are looking for particular solution of the limiting Hele-Shaw problem \eqref{eq:n1}-\eqref{eq:n2} under the form:
$$ n_1(t,x)=\mathbf{1}_{\Omega_1(t)}(x) \quad \text{ with} \quad \Omega_1(t)=\{n_1(x,t)=1\}=B_{[-R_1(t),R_1(t)]},$$
$$ n_2(t,x)=\mathbf{1}_{\Omega_2(t)}(x) \quad \text{ with} \quad  \Omega_2(t)=\{n_2(x,t)=1\}={B_{(-L,L)}}\setminus B_{[-R_1(t),R_1(t)]}.$$
The radius $R_1(t)$, with $R_1(t)<L$, is computed according to the geometric motion rules
\begin{equation*}
  \left\{
   \begin{aligned}
        &R_1'(t)=-\partial_x p(R_1(t)),\\
        & R_1(0)=R_1^0,
      \end{aligned}
    \right.
\quad
\end{equation*}
where $p$ is the solution of 
$$ 
-\partial_{xx} p = n_1 G_1(p) +n_2 G_2(p)  \quad \text{in} \quad \Omega_1(t)\cup\Omega_2(t).
$$
Such functions $n_1$ and $n_2$ are solutions to the limiting Hele-Shaw problem \eqref{eq:n1}-\eqref{eq:n2}. Indeed by differentiating the densities, in the distributional sense, we get,
\begin{equation*}
\begin{aligned}
&\partial_t n_1= R_1'(t) (\delta_{x=R_1(t)}-\delta_{x=-R_1(t)}) , \\
& \partial_x (n_1 \partial_x p)=(\delta_{x=R_1(t)}-\delta_{x=-R_1(t)}) \partial_x p + \mathbf{1}_{ [-R_1(t),R_1(t)]} \partial_{xx} p.
\end{aligned}
\end{equation*}
Since $R_1'(t)= -\partial_{x}p(R_1(t))$, it follows that
$$ \partial_t n_1 - \partial_x (n_1 \partial_x p) =  \mathbf{1}_{ [-R_1(t),R_1(t)]} G_1(p) = n_1 G_1(p).$$
By applying the same computation on $n_2$ we get,
$$ 
\partial_t n_2 - \partial_x (n_2 \partial_x p) = n_2 G_2(p).
$$

\paragraph{Analytical solution.} As this paper is reduced to the case of dimension 1, we can compute the exact solution of the limiting Hele-Shaw problem \eqref{eq:n1}-\eqref{eq:n2} with this initial configuration for some simple expression of the growth terms $G_1$ and $G_2$. For instance, let us suppose that the growth terms are linear,
$$ 
G_1(p)=g_1(P_M^1-p) \quad \mbox{ and } \quad G_2(p)=g_2(P_M^2-p). 
$$
This choice means that as the pressure increases, the tumor will grow more slowly, until the pressure reach a critical value ($P_M^1$ or $P_M^2$ depending of the species) where the growth rate takes negative values, modelling the apoptosis of cells. The solution of the pressure equation is given by,
$$ 
p(x,t) = \left\{
    \begin{array}{ll}
         (P_M^1-P_M^2) \frac{\sqrt{g_2} \sinh(\sqrt{g_2}(R_1(t)-L))\cosh(\sqrt{g_1}x)}{\lambda}  & \quad \text{on } \Omega_1(t), \\
         (P_M^1-P_M^2) \frac{\sqrt{g_1} \cosh(\sqrt{g_2}(x-L))\sinh(\sqrt{g_1}R_1(t))}{\lambda} & \quad \text{on } \Omega_2(t).
    \end{array}
\right.
$$
with 
\begin{align*}
&\lambda= \sqrt{g_1}  \cosh(\sqrt{g_2}(R_1-L))\sinh(\sqrt{g_1}R_1) - \sqrt{g_2}  \sinh(\sqrt{g_2}(R_1-L))\cosh(\sqrt{g_1}R_1), 
\end{align*}
Computing the derivatives at the interface $R_1(t)$ we deduce that,
\begin{equation} \label{eqR_x}
          R'_1(t)=  -\sqrt{g_1 g_2} (P_M^1-P_M^2)  \frac{\sinh(\sqrt{g_2}(R_1(t)-L))\cosh(\sqrt{g_1}R_1(t))}{\lambda} .
\end{equation}
We are interested in the study of the evolution of $R_1$ in time, in function of the parameters $g_1, g_2, P_M^1, P_M^2$. Given that $0 \leq R_1(t) \leq L$, it is straightfoward that $\lambda \leq 0$. From \eqref{eqR_x}, we deduce that the sign of $R_1'(t) \geq 0$ is the same as the sign of $P_M^1-P_M^2$.

\paragraph{Numerical simulations} Finally we show some simulations of the mechanical problem for the case of spheroid tumor growth. We run the simulations with $\epsilon=0.01$ as we have shown in Section \ref{sec:numeps} that the simulations are close enough from the free boundary model. We consider two populations with the same space configuration as at the beginning of this section,
$$ n_1=0.5\, \mathbf{1}_{B_{[-R_1(t),R_1(t)]}} \quad \text{ and } \quad n_2=0.5\, \mathbf{1}_{B_{[-L,L]\setminus [-R_1(t),R_1(t)]}},$$
with
$$ R_1(0) = 0.5\quad \mbox{ and } \quad R_2(0) = 1.5.$$ 
We fix the parameter $\epsilon$ to the value 1.
The growth rates are going to defined the dynamics of the two populations. In the first example, we choose growth functions such that we observe death of the inner species $n_1$, which corresponds to the apoptosis of one population of cells.
The growth functions are defined by
\begin{equation}\label{GRfig1}
 G_1(p)=10(1-p)  \quad \mbox{ and } \quad G_2(p)=10(1-p/2),
\end{equation}
In a second example we display an example where the species $n_1$ grows and pushes the surrounding species $n_2$.
\begin{equation}\label{GRfig3}
 G_1(p)=10(4-p)  \quad \mbox{ and } \quad G_2(p)=10(1-p/2).
\end{equation} 
 
 \begin{figure}[!ht]
    \centering
  \begin{subfigure}[Case 1]{
 \includegraphics[width=1.\linewidth]{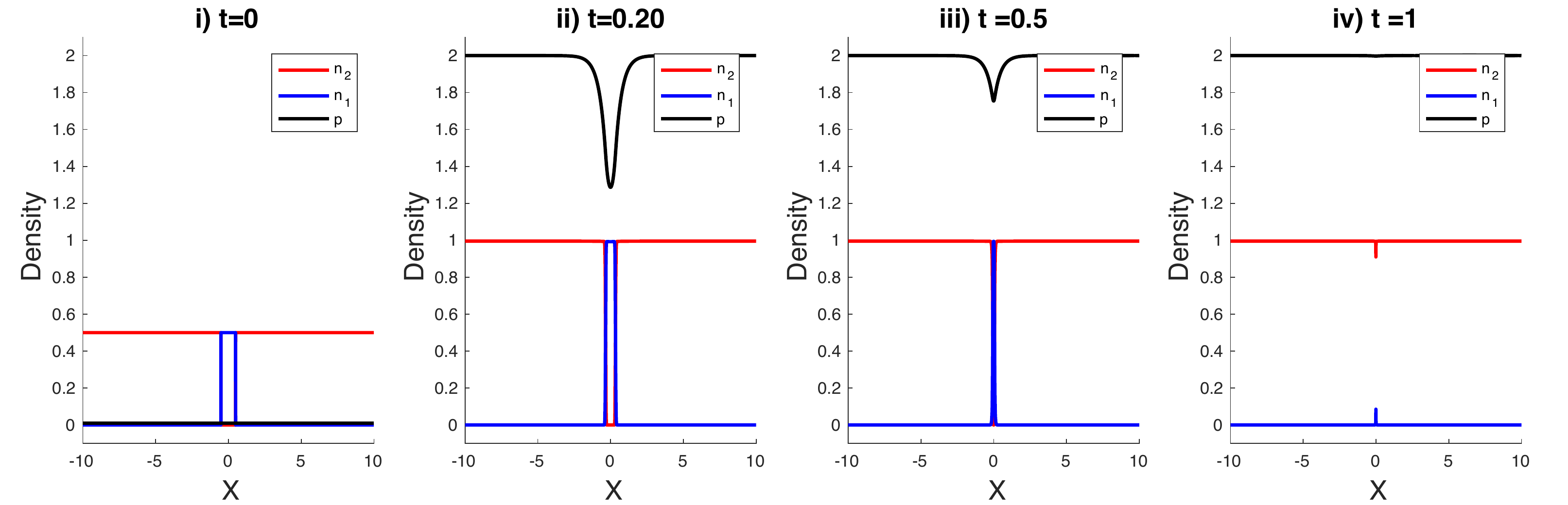}
 }
 \end{subfigure}
 \hfill
    \begin{subfigure}[Case 2]{
 \includegraphics[width=1.\linewidth]{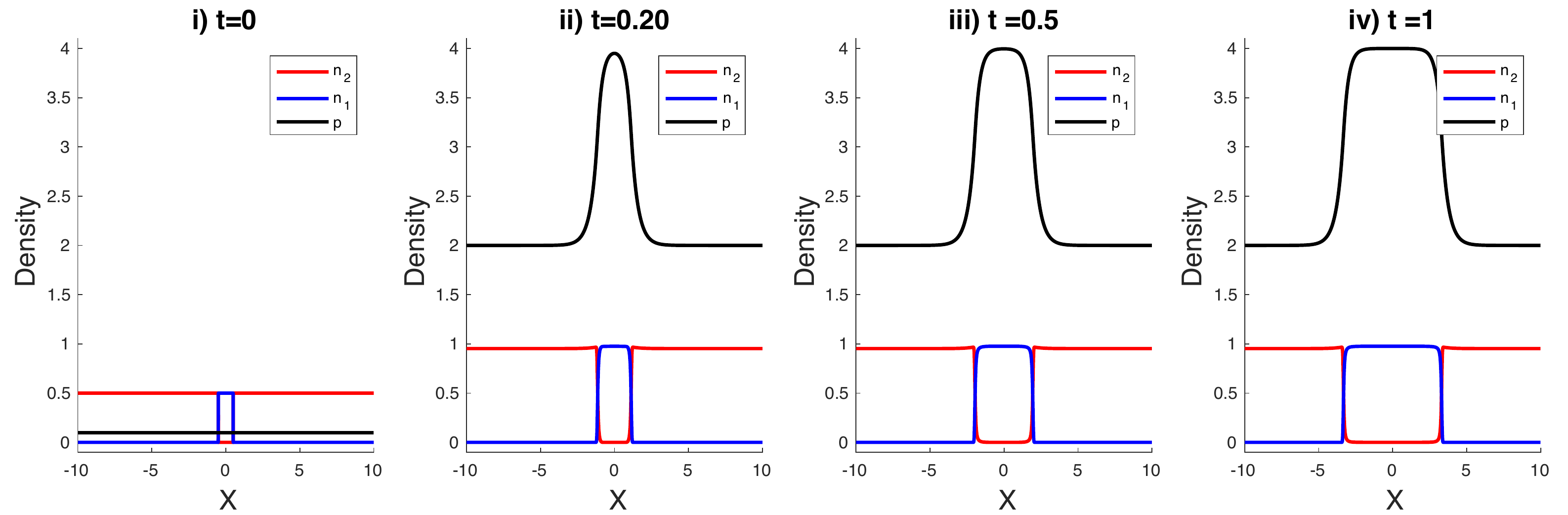}
 }
 \end{subfigure}
 \hfill 
 \caption{Densities $n_1$ (red), $n_2$ (blue) and $p$ (black) as functions of position $x$ for different growth function at different times: (i) $t= 0.3$, (ii) $t = 0.6$, (iii) $t = 1$, (iv) $t = 1.5$.}
 \label{fig:3}
 \end{figure}

In Fig \ref{fig:3}, we display the time dynamics of the densities of these two examples at different time step:  (i) $t= 0$, (ii) $t = 0.1$, (iii) $t = 0.3$, (iv) $t = 0.6$, (v) $t = 1$. It illustrates the two different behaviours mentionned above by \eqref{GRfig1} and \eqref{GRfig3}.
In  Fig \ref{fig:3} (a) the red species grows and the blue species disappears since the pressure in the domain is bigger that $P_M^1$.
In Fig \ref{fig:3} (b), the blue species pushes the red species and propagates.

\newpage

\begin{appendix}

\section{Uniqueness of solutions: Regularized dual problem}\label{appen}

In this appendix we prove rigorously Proposition \ref{TH2} using a regularization procedure for the dual problem \ref{dualpb}.
We follow closely the ideas in \cite[p 109-110]{PQV} which are recall here for the sake of completness of this paper.
Since the coefficients $A_i$, $B_i$ are not strictly positive and not smooth, then we need to regularize the problem \ref{dualpb}. For $i=1,2$, let ${A^k_i} $,  ${B^k_i} $, ${C^k_i} $ and  ${G^k_i}$ be sequences of smooth functions such that,
\begin{equation*}
  \left\{
      \begin{aligned}
         & \| A_i - A_i^k\|_{L^2(Q_T)} < \frac{\alpha_i}{k} , & \frac{1}{k}<A_i^k\leq1,  &&\\
        & \| B_i - B_i^k\|_{L^2(Q_T)} < \frac{\beta_i}{k} , & \frac{1}{k}<B_i^k\leq1, & &\\
        & \| C_i - C_i^k\|_{L^2(Q_T)} < \frac{\delta_{1,i}}{k} , & 0 \leq C_i^k\leq M_{1,i}, &\quad  \| \partial_t C_i^k \|_{L^1(Q_T)} \leq K_{1,i}, &\\
        & \| G_i(q_i) - G_i^k\|_{L^2(Q_T)} < \frac{\delta_{2,i}}{k} , & |G_i^k| < M_{2,i}, & \quad \| \partial_x G_i^k \|_{L^2(Q_T)} \leq K_{2,i} ,&\\
      \end{aligned}
    \right.
\end{equation*}
for some constant $\alpha_i, \beta_i, \delta_{1,i}, \delta_{2,i}, M_{1,i}, M_{2,i}, K_{1,i}, K_{2,i} $.
For any smooth function $\Phi_i$, $i=1,2$, we consider the following regularised dual system,
\begin{equation}\label{regdualpb}
\left\{\begin{array}{l}
\pa_t \psi^k_i + \frac{B^k_i}{A^k_i} \pa_{xx} \psi^k_i + G^k_i \psi^k_i - C^k_i \frac{B^k_i}{A^k_i}\psi^k_i = \Phi_i, \quad \mbox{ in } \ Q_T,  \\[1mm]
\partial_x\psi^k_i(\pm L) = 0 \quad \mbox{ in } \ (0,T), \qquad
\psi^k_i(\cdot,T) = 0 \quad \mbox{ in } (-L,L).
\end{array}\right.
\end{equation}
As the coefficients $\frac{B^k_i}{A^k_i}$ for $i=1,2$, are positive, continuous and bounded below away from zero, the dual equation is uniformly parabolic in $Q_T$. Then we can solve it and we denote $\psi^k_i$ the solution of \eqref{regdualpb}. This solution $\psi^k_i$ is smooth and can be used as a test function in \eqref{eq:uniq2}.

Using \eqref{eq:uniq2} and \eqref{regdualpb}, for $i=1,2$,
\begin{equation*}
\iint_{Q_T} (n_{i0}-\widetilde{n_{i0}}) \Phi_i \,dxdt = I_{1,i}- I_{2,i}- I_{3,i}+ I_{4,i},
\end{equation*}
where
\begin{equation*}
      \begin{aligned}
         & I_{1,i} =\iint_{Q_T}( {n_{i0}}-\widetilde{n_{i0}}+q_i - \widetilde{q_i}) \frac{B^k_i}{A^k_i} (A_i- A_i^k) (\Delta \psi^k_i- C_i^k \psi^k_i ) \,dxdt,\\
         & I_{2,i} =\iint_{Q_T}( {n_{i0}}-\widetilde{n_{i0}}+q_i - \widetilde{q_i}) (B_i- B_i^k) (\Delta \psi^k_i- C_i^k \psi^k_i ) \,dxdt, \\
         & I_{3,i} =  \iint_{Q_T}(n_{i0}-\widetilde{n_{i0}}) (G_i(q_i)-G_{i}^k)\psi^k_i \,dxdt, \\
         & I_{4,i} = \iint_{Q_T}( {n_{i0}}-\widetilde{n_{i0}}+q_i - \widetilde{q_i}) B_i(C_i- C_i^k) \psi^k_i \,dxdt .
      \end{aligned}
\end{equation*}
We intend to show that at the limit $k \rightarrow +\infty$, $I_{j,i}$ converges to 0 for $j=1,2,3,4$ and $i=1,2$. To show the convergence, we are going to find estimates on $\psi^k_i$ and its derivative:
\begin{itemize}
\item As $\psi^k_i$ is solution of \eqref{regdualpb} with $C_i^k $ nonnegative and $G_{i}^k$ uniformly bounded, from the maximum principle we get,
$$ \| \psi^k_i \|_{L^{\infty}(Q_T)} \leq \kappa_1 ,$$
where $ \kappa_1 $ is independent of $k$.
\item Multipling \eqref{regdualpb} by $\partial_{xx} \psi^k_i- C_i^k \psi^k_i$ and integrating on $\Omega \times (t, T)$, we get
\begin{equation}
      \begin{aligned} \label{regdualprobineq}
       \frac{1}{2} \| \partial_x \psi^k_i (t)\|^2_{L^2(-L,L)} + \iint_{\Omega \times (t, T)} \frac{B^k_i}{A^k_i} |\partial_{xx} \psi^k_i- C_i^k \psi^k_i|^2 \,dxdt = - \int_{-L}^L (C_i^k \frac{ (\psi^k_i)^2}{2})(t) \,dx & \\
       + \iint_{(-L,L) \times (t, T)} \Big(-\partial_t C_i^k \frac{ (\psi^k_i)^2}{2}-G_i^k|\partial_x \psi^k_i|^2 - \psi^k_i \partial_x G_i^k \, \partial_x \psi^k_i + C_i^k  G_i^k (\psi^k_i)^2 & \\
       + \psi^k_i \partial_{xx} \Phi_i - \Phi_i C_i^k \psi^k_i \Big)\,dxdt & \\
       \leq K \left(1-t+ \int_t^T \| \partial_x \psi^k_i (s)\|^2_{L^2(-L,L)} ds\right), &
      \end{aligned}
      \end{equation}
      with $K$ a constant independent of $k$. By using Gronwall lemma we get the following bound,
      $$ \sup_{0 \leq t \leq T} \| \partial_x \psi^k_i \|_{L^2(Q_T)} \leq \kappa_2 ,$$
with $ \kappa_2 $ independent of $k$.
\item Using \eqref{regdualprobineq}, we get
$$ \| \Big(\frac{B^k_i}{A^k_i}\Big)^{1/2} (\partial_{xx} \psi^k_i- C_i^k \psi^k_i) \|_{L^2(Q_T)} \leq \kappa_3 ,$$
with $ \kappa_3 $ independent of $k$.
\end{itemize}
We use these bounds to prove the convergence of the integrals $I_{j,i}$ for $j=1,2,3,4$ and $i=1,2$. We get,
\begin{equation*}
      \begin{aligned}
          I_{1,i} & = \tilde{K} \iint_{Q_T} \frac{B^k_i}{A^k_i} |A_i- A_i^k| |\partial_{xx} \psi^k_i- C_i^k \psi^k_i | \,dxdt \leq \tilde{K} \| (\frac{B^k_i}{A^k_i}\Big)^{1/2} (A_i- A_i^k) \|_{L^2(Q_T)} \\
         & \leq  \tilde{K} k^{1/2} \| (A_i- A_i^k) \|_{L^2(Q_T)} \leq \tilde{K} \alpha k^{-1/2}\\
          I_{2,i} &= \tilde{K} \iint_{Q_T} |B_i- B_i^k| |\partial_{xx} \psi^k_i- C_i^k \psi^k_i | \,dxdt \leq \tilde{K}  \| (\frac{A^k_i k^{1/2}}{B^k_i}\Big)^{1/2} (B_i- B_i^k) \|_{L^2(Q_T)} \\
                   & \leq \tilde{K} k^{1/2} \| (B_i- B_i^k) \|_{L^2(Q_T)} \leq \tilde{K} \beta k^{-1/2},\\
          I_{3,i} &=  \iint_{Q_T}|n_{i0}-\widetilde{n_{i0}}| |G_1(q_1)-G_{i}^k| |\psi^k_i| \,dxdt \leq  \tilde{K} \| (G_i(q_i)- G_i^k) \|_{L^2(Q_T)} \leq  \tilde{K}  \frac{\delta_{2,i}}{n},\\
          I_{4,i} &= \tilde{K} \iint_{Q_T} B_i|C_i- C_i^k|| \psi^k_i| \,dxdt \leq \tilde{K} \| (C_i- C_i^k) \|_{L^2(Q_T)} \leq \frac{\tilde{K}}{n} .
      \end{aligned}
\end{equation*}
where $\tilde{K}$ is a contant independent of of $k$. It justifies that $\lim_{k \rightarrow +\infty} I_{j,i} =0 $ for $j=1,2,3,4$ and $i=1,2$. Then 
$$ 
\lim_{k \rightarrow +\infty} \iint_{Q_T} (n_{i0}-\widetilde{n_{i0}}) \Phi_i \,dxdt =0,
$$ 
for any smooth function $\Phi_i$ for $i=1,2$. This implies that ${n_{10}} = \widetilde{n_{10}}$ and ${n_{20}} = \widetilde{n_{20}}$.
Then, we deduce from \eqref{eq:uniq1},
\begin{equation*}
\iint_{Q_T} \Big[ (q_i - \widetilde{q_i}) \pa_{xx} \psi_i + n_{i0}( G_i(q_i) - G_i(\widetilde{q_i}))\psi_i\Big] \,dxdt = 0.
\end{equation*}
By using $\psi_i = q_i - \widetilde{q_i}$, we recover $q_i = \widetilde{q_i}$ for $i=1,2$. It concludes the proof.

\end{appendix}

%
%


\bibliographystyle{abbrv}
\bibliography{Cell_sorting_segregation_v12_2}

\end{document}